\documentclass[12pt]{amsart}

\usepackage{graphicx}

\usepackage{subcaption} 
\usepackage{xfrac}   
\usepackage{faktor}
\usepackage{tikz-cd}
\usepackage{mathrsfs}
\usepackage{mathtools}
\usepackage{hyperref}
\usepackage{amsmath}
\usepackage{amssymb}
\hypersetup{bookmarks=true,
	unicode=true,
	colorlinks=true,
	citecolor=black,
	linkcolor=black,
	urlcolor=black,
	plainpages=false,
	pdfpagelabels=true}

 \usepackage{url}	
 \allowdisplaybreaks 

\usepackage{tikz-cd}
\usepackage{pgf}

\usepackage{xcolor}

\usepackage{comment} 

\usepackage[all]{xy}

\newtheorem{teo}{Theorem}[section]
\newtheorem{theorem}[teo]{Theorem}

\newtheorem{corollary}[teo]{Corollary}

\newtheorem{lemma}[teo]{Lemma}

\newtheorem{proposition}[teo]{Proposition}

\theoremstyle{definition}
\newtheorem{definition}[teo]{Definition}

\newtheorem{example}[teo]{Example}

\theoremstyle{remark}
\newtheorem{remark}[teo]{Remark}

\numberwithin{figure}{section}

\newcommand{\Ba}{\mathcal{B}}

\newcommand{\CC}{\mathcal{C}}

\newcommand{\Dc}{\mathcal{D}}

\newcommand{\E}{\mathcal{E}}

\newcommand{\op}{\mathrm{op}}

\newcommand{\Sets}{\mathrm{Set}}

\newcommand{\Uc}{\mathcal{U}}

\DeclareMathOperator{\Hom}{Hom}

\newcommand{\Cat}{\mathrm{Cat}}

\begin{document}

\title[Vector fields, initial scaffolds and database reduction]{Vector fields, initial scaffolds and database reduction}
\thanks{}

\author[Isaac Carcacía-Campos]{%
	Isaac Carcacía-Campos
}

 \address{%
           	Isaac Carcacía-Campos
            \\
              Departamento de Matem\'aticas, Universidade de Santiago de Compostela, 15782-SPAIN}
               \email{isaac.c.campos@usc.es}


\begin{abstract} 
Reduction replaces a mathematical object with a simpler model that
retains the relevant information. We introduce left and right vector fields on small categories as tools for reducing finite acyclic categories while preserving their directed homotopical information. We relate these fields to directed deformation retracts and beat-object reductions, and show that right vector-field reductions preserve the directed sectional category of right directed fibrations and the global sections of functorial databases.

We also extend initial scaffolds from posets to acyclic categories. These provide smaller indexing categories that preserve limits and, in particular, globally coherent selections in databases. Finally, we prove that initial scaffolds are preserved by right directed deformation retracts and hence by right vector-field reductions.
\end{abstract}

\keywords{categorical vector fields, acyclic categories, directed
homotopy, beat objects, initial scaffolds, databases, global sections}

\subjclass[2020]{Primary 18A25; Secondary 55P10, 55M30, 18A30, 68P15.}
\maketitle


\section*{Introduction}
A recurring strategy in mathematics and science is to replace a complicated object by a simpler model that retains the information relevant to the problem at hand. In combinatorial topology, this principle underlies the reduction of finite \(T_0\)-spaces and finite posets by beat points \cite{Stong,MR3024764}, the theory of elementary simplicial collapses \cite{WhiteheadCollapse}, and discrete Morse theory for cell complexes and finite posets \cite{Forman,Scoville,MosqueraPosets}. This viewpoint is also related to the dynamics of finite spaces, where semiflows are closely connected with down beat points and deformation retracts \cite{ChocanoSemiflows,ChocanoFlowsAlexandroff}. The purpose of this article is to develop analogous reduction procedures for small categories, especially finite acyclic ones, which can be seen as generalizations of posets.

From this perspective, the homotopy theory of finite posets lies inside a broader homotopy theory of small categories. Natural transformations have long been interpreted as building blocks for homotopies between functors \cite{LEEHomotopy}. Since natural transformations are intrinsically oriented, they also lead to the directed homotopy theory studied by Grandis \cite{GrandisShape} and by the author in a recent article \cite{DirectedHomotopyDatabases}. Our aim is to develop categorical reductions that preserve either directed homotopical information or the limits of diagrams indexed by the category, with particular attention to functorial databases.

The first construction is based on categorical vector fields. A vector is often first introduced geometrically as an arrow before being replaced by the abstract notion of an element of a vector space. We return to this elementary picture and regard morphisms as vectors. The arrows starting or ending at an object form two indexed families of comma categories, whose Grothendieck constructions define the corresponding tangent categories. Vector fields are sections of their canonical projections. Equivalently, they are endofunctors equipped with natural transformations to or from the
identity.

Our notion differs in particular from the discrete Morse-theoretic vector fields used by d'Antonio and Delucchi for acyclic categories \cite{dAntonioDelucchiToricArrangements}. Their construction is based on partial acyclic matchings and is designed to produce cellular collapses and Morse complexes.

For finite acyclic categories, every vector field stabilizes to a directed retract onto its fixed subcategory. Building on Tanaka's reduction theory for acyclic categories \cite{StrongHomTan,SimpleHom_Tan}, we show that this retract decomposes into down or up beat-object removals, according to the orientation of the field, and that every such oriented reduction determines an idempotent vector field. The related work of Chocano on semiflows of finite spaces \cite{ChocanoFlowsAlexandroff,ChocanoSemiflows} also motivates our categorical flows of successive vector-field
reductions

The homotopical nature of vector-field reductions makes them compatible with lifting problems. Building on the strong and directed fibrations studied in \cite{carcaciacampos2026weakstrongfibrationsfunctors,
DirectedHomotopyDatabases}, we prove that restriction along a vector-field retract preserves the directed sectional category of every right directed fibration. Hence the sectional problem may be transferred to the fixed subcategory.

In the functorial approach to databases, schemas are small categories and instances are set-valued functors \cite{SevenSketches,SpivakDataMigration,SpivakDatabaseQueries}. We prove that vector-field reductions preserve their global sections. For Grothendieck opfibrations, they also preserve ordinary sectional category.

A second, complementary reduction is provided by initial scaffolds. Dey and Lesnick introduced initial scaffolds for posets with finite downsets as explicit models for minimal initial functors \cite{dey2026limitcomputationposetsminimal}. Their principal applications concern the efficient computation of limits of vector-space-valued diagrams. We extend their construction to lower well-founded acyclic categories by replacing open downsets with punctured lower comma categories. Our main application is instead to set-valued diagrams arising as functorial databases.

The resulting scaffold is a subcategory whose inclusion is initial. It therefore preserves the limits of all diagrams and, in particular, the set of global sections of every database instance. We also prove that the removal of a down beat object leaves the initial scaffolds unchanged. Since every right vector field stabilizes through a sequence of down beat-object removals, a finite acyclic category and the fixed subcategory of any right vector field have exactly the same initial scaffolds. 

The paper is organized as follows. Sections~\ref{sec:tangent_category}--\ref{sec:homotopy_retracts_vector_fields} develop categorical vector fields, their fixed subcategories, and their relation with directed retracts and beat-object reductions. Section~\ref{sec:homotopy_fluxes} recalls strong and directed homotopies and introduces categorical flows as successive vector-field reductions preserving strong homotopy type.  Section~\ref{sec:sectional_category_reductions} studies the preservation of directed sectional category. Section~\ref{sec:databases_retractions} applies these results to Grothendieck opfibrations and functorial databases. Finally, Section~\ref{sec:initial-scaffolds} extends initial scaffolds to acyclic categories and relates them to beat-object and vector-field reductions.

\section{Tangent categories}
\label{sec:tangent_category}

Let \(\mathbb I_1=(0\to1)\) denote the walking arrow. The arrow category
of a small category \(\CC\) is the functor category
\(\CC^{\mathbb I_1}\). Its objects are the morphisms of \(\CC\), and its
morphisms are commutative squares. It has two canonical projections
\[
\operatorname{dom},\operatorname{cod}
\colon
\CC^{\mathbb I_1}\longrightarrow\CC.
\]

For \(c\in\CC\), the category \(c\downarrow\CC\) consists of the arrows
starting at \(c\), whereas \(\CC\downarrow c\) consists of the arrows
ending at \(c\). These assignments define functors
\[
-\downarrow\CC\colon\CC^{\op}\longrightarrow\Cat,
\qquad
\CC\downarrow-\colon\CC\longrightarrow\Cat
\]
by composing or precomposing the morphisms.

We briefly recall the two Grothendieck constructions used below. If
\(F\colon\CC\to\Cat\) is covariant, the objects of
\(\int^\CC F\) are pairs \((c,x)\), with \(x\in F(c)\). A morphism
\[
(c,x)\longrightarrow(d,y)
\]
is a pair \((f,\varphi)\), where \(f\colon c\to d\) and
\[
\varphi\colon F(f)(x)\longrightarrow y
\]
is a morphism in \(F(d)\).

Dually, if \(G\colon\CC^{\op}\to\Cat\), the objects of
\(\int_\CC G\) are pairs \((c,x)\), with \(x\in G(c)\), and a morphism
\[
(c,x)\longrightarrow(d,y)
\]
is a pair \((f,\varphi)\), where \(f\colon c\to d\) and
\[
\varphi\colon x\longrightarrow G(f)(y)
\]
is a morphism in \(G(c)\). In both cases, projection onto the first
coordinate defines a functor to \(\CC\).

\begin{definition}\label{def:tangent_categories}
The \emph{right tangent category} and the \emph{left tangent category} of
\(\CC\) are, respectively,
\[
T_R\CC
=
\int^\CC(\CC\downarrow-),
\qquad
T_L\CC
=
\int_\CC(-\downarrow\CC),
\]
with their canonical projections
\[
\pi_R\colon T_R\CC\longrightarrow\CC,
\qquad
\pi_L\colon T_L\CC\longrightarrow\CC.
\]
\end{definition}

The fibre of \(\pi_R\) over \(c\) is \(\CC\downarrow c\), while the fibre
of \(\pi_L\) over \(c\) is \(c\downarrow\CC\). Thus, the two projections
collect, respectively, the arrows ending and starting at each object.

\begin{proposition}\label{prop:tangent_arrow_category}
There are canonical isomorphisms
\[
T_R\CC\cong\CC^{\mathbb I_1}\cong T_L\CC.
\]
Under these identifications,
\[
\pi_R=\operatorname{cod}
\qquad\text{and}\qquad
\pi_L=\operatorname{dom}.
\]
\end{proposition}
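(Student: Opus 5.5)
We will write down explicit functors in both directions and check directly that they are mutually inverse. The only work is to unwind the Grothendieck constructions of Definition~\ref{def:tangent_categories}.

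\emph{The right tangent category.} An object of \(T_R\CC\) is a pair \((c,x)\) with \(x\colon a\to c\) an object of \(\CC\downarrow c\). A morphism \((c,x)\to(d,y)\), with \(y\colon b\to d\), is a pair \((f,\varphi)\) where \(f\colon c\to d\) and \(\varphi\colon (\CC\downarrow f)(x)\to y\) is a morphism in \(\CC\downarrow d\). Since \((\CC\downarrow f)(x)=f\circ x\colon a\to d\), the datum \(\varphi\) is a morphism \(g\colon a\to b\) with \(y\circ g=f\circ x\). Thus \((f,\varphi)\) is precisely a commutative square from \(x\) to \(y\), with top edge \(g\) and bottom edge \(f\).

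We define \(\Phi_R\colon T_R\CC\to\CC^{\mathbb I_1}\) by \((c,x)\mapsto x\) and \((f,\varphi)\mapsto(g,f)\). Conversely, \(\Psi_R\) sends an arrow \(x\colon a\to c\) to \((\operatorname{cod}x,x)\) and a square \((g,f)\) to \((f,g)\). These assignments are visibly mutually inverse on objects and morphisms.

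It remains to check that \(\Phi_R\) is a functor. Composition in \(\int^\CC F\) is
\[
(f',\varphi')\circ(f,\varphi)=\bigl(f'f,\ \varphi'\circ F(f')(\varphi)\bigr).
\]
For \(F=\CC\downarrow-\), the functor \(F(f')\) acts on morphisms as the identity on the underlying top arrow, so the composite top arrow is \(g'g\). This is exactly horizontal pasting of squares. Identities \((1_c,1_x)\) correspond to the identity square. Finally, \(\operatorname{cod}\circ\Phi_R(c,x)=c=\pi_R(c,x)\), and likewise on morphisms, so \(\pi_R=\operatorname{cod}\).

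\emph{The left tangent category.} This case is dual. An object of \(T_L\CC\) is \((c,x)\) with \(x\colon c\to a\). A morphism \((c,x)\to(d,y)\), with \(y\colon d\to b\), is \((f,\varphi)\) where \(f\colon c\to d\) and \(\varphi\colon x\to(f\downarrow\CC)(y)=y\circ f\) is a morphism in \(c\downarrow\CC\). Such a \(\varphi\) is an arrow \(g\colon a\to b\) with \(g\circ x=y\circ f\), which is again a commutative square from \(x\) to \(y\), now with \(f\) on the domain side. The composition formula in \(\int_\CC G\) is
\[
(f',\varphi')\circ(f,\varphi)=\bigl(f'f,\ G(f)(\varphi')\circ\varphi\bigr).
\]
Since \(G(f)\) also leaves the underlying arrow unchanged, this again reduces to pasting of squares. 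The projection \(\pi_L\) records \(c=\operatorname{dom}x\), so \(\pi_L=\operatorname{dom}\).

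The only step needing care is the bookkeeping in these composition formulas. One must confirm that reindexing along \(f\), through \(F(f')\) or \(G(f)\), does not alter the underlying arrow of a morphism in a comma category. Once that is observed, functoriality and the inverse property are immediate. Composing the two isomorphisms gives \(T_R\CC\cong T_L\CC\).
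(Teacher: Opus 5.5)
Your proposal is correct and follows the paper's proof: both unwind the two Grothendieck constructions, identify a morphism \((f,\varphi)\) with a commutative square, and read off \(\pi_R=\operatorname{cod}\) and \(\pi_L=\operatorname{dom}\). The one difference is that you verify the composition formulas explicitly, using the fact that reindexing along \(f\) does not change the underlying arrow. The paper only asserts that composition is preserved.
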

\begin{proof}
An object of either tangent category is simply a morphism of \(\CC\).

Consider \(u\colon x\to c\) and \(v\colon y\to d\) as objects of
\(T_R\CC\). A morphism between them consists of morphisms
\[
h\colon x\longrightarrow y,
\qquad
f\colon c\longrightarrow d
\]
satisfying
\[
fu=vh.
\]
Thus, it is precisely a commutative square from \(u\) to \(v\), and hence
a morphism in \(\CC^{\mathbb I_1}\). This identification preserves
identities and composition and sends \(\pi_R\) to
\(\operatorname{cod}\).

The contravariant construction gives the analogous description of
\(T_L\CC\): its morphisms are again commutative squares, and its
projection records the domain.
\end{proof}

\begin{remark}
Although \(T_R\CC\) and \(T_L\CC\) have canonically isomorphic total
categories, their projections are different functors.
\end{remark}

\section{Vector fields on small categories}
\label{sec:vector_fields}

The two tangent projections give rise to two notions of vector field,
according to which endpoint is retained by the tangent projection.

\begin{definition}\label{def:categorical_vector_fields}
Let \(\CC\) be a small category.

\begin{enumerate}
    \item A \emph{right vector field} on \(\CC\) is a section
    \(
    V\colon\CC\longrightarrow T_R\CC
    \)
    of \(\pi_R\).

    \item A \emph{left vector field} on \(\CC\) is a section
    \(
    V\colon\CC\longrightarrow T_L\CC
    \)
    of \(\pi_L\).
\end{enumerate}
\end{definition}

Using the identifications
\(T_R\CC\cong\CC^{\mathbb I_1}\cong T_L\CC\), vector fields admit the
following endofunctorial description.

\begin{proposition}\label{prop:vector_fields_natural_transformations}
Let \(\CC\) be a small category.

\begin{enumerate}
    \item A right vector field is equivalently an endofunctor
    \(R\colon\CC\to\CC\) equipped with a natural transformation
    \[
    \varepsilon\colon R\Rightarrow1_\CC.
    \]

    \item A left vector field is equivalently an endofunctor
    \(L\colon\CC\to\CC\) equipped with a natural transformation
    \[
    \eta\colon1_\CC\Rightarrow L.
    \]
\end{enumerate}
\end{proposition}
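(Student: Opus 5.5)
The plan is to use Proposition~\ref{prop:tangent_arrow_category} to replace \(T_R\CC\) by \(\CC^{\mathbb I_1}\) with \(\pi_R=\operatorname{cod}\). A right vector field then becomes a functor \(V\colon\CC\to\CC^{\mathbb I_1}\) with \(\operatorname{cod}\circ V=1_\CC\). The left case is dual, using \(T_L\CC\cong\CC^{\mathbb I_1}\) and \(\pi_L=\operatorname{dom}\), so I treat only the right case in detail.

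The key step is the standard exponential adjunction. Functors \(\CC\to\CC^{\mathbb I_1}\) correspond bijectively to functors \(\CC\times\mathbb I_1\to\CC\), and these in turn correspond to triples \((F,G,\alpha)\) with \(F,G\colon\CC\to\CC\) and \(\alpha\colon F\Rightarrow G\). Concretely, \(V\) corresponds to the triple \(F=\operatorname{dom}\circ V\), \(G=\operatorname{cod}\circ V\), and \(\alpha_c=V(c)\).

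I will verify this directly rather than cite it.

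\begin{itemize}
\item \emph{From \(V\) to a triple.} Write \(V(c)=(\varepsilon_c\colon R(c)\to c)\). The value of \(V\) on a morphism \(f\colon c\to d\) is a commutative square from \(\varepsilon_c\) to \(\varepsilon_d\). Its codomain component must be \(f\), because \(\operatorname{cod}\circ V=1_\CC\) holds on morphisms. Call its domain component \(R(f)\colon R(c)\to R(d)\); commutativity of the square reads \(f\,\varepsilon_c=\varepsilon_d\,R(f)\).
\item \emph{Functoriality of \(R\).} Since \(V\) preserves identities and composites, and these are computed componentwise in \(\CC^{\mathbb I_1}\), the assignment \(R\) is a functor. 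The commutativity equation then says exactly that \(\varepsilon\colon R\Rightarrow1_\CC\) is natural.
\item \emph{From a pair back to \(V\).} Given \((R,\varepsilon)\), define \(V(c)=\varepsilon_c\) and \(V(f)=(R(f),f)\). This is a square by naturality of \(\varepsilon\), is functorial because \(R\) is a functor, and satisfies \(\pi_R\circ V=1_\CC\).
\end{itemize}

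The two constructions are visibly inverse to each other, which establishes the bijection.

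For the left case, write \(V(c)=(\eta_c\colon c\to L(c))\) with the domain component forced to be \(f\). This yields \(\eta\colon1_\CC\Rightarrow L\) in the same way.

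The main obstacle is not conceptual but consists of bookkeeping. One must confirm that the composite of the identifications in Proposition~\ref{prop:tangent_arrow_category} really sends the Grothendieck-construction data \((f,\varphi)\) to the square whose components are \(\varphi\)-related and \(f\). In particular, for \(T_L\CC\) the contravariant construction records \(\varphi\colon x\to G(f)(y)\), and one must check that this matches a square with domain component \(f\). I would make this explicit by unwinding the proof of Proposition~\ref{prop:tangent_arrow_category} once for each orientation.
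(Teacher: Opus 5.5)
Your proof is correct and follows the paper's argument. Both identify $V(c)$ with an arrow $\varepsilon_c\colon R(c)\to c$, read off $R(f)$ as the domain component of the square $V(f)$ (whose codomain component is forced to be $f$), and obtain functoriality of $R$ and naturality of $\varepsilon$ from functoriality of $V$ and commutativity of the squares; the converse and the left case are handled the same way. The bookkeeping you flag for $T_L\CC$ works out as you expect: a morphism $\varphi\colon u\to vf$ in $c\downarrow\CC$ is exactly a commutative square whose domain component is $f$.
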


\begin{proof}
Let \(V\colon\CC\to T_R\CC\) be a right vector field. Since
\(\pi_RV=1_\CC\), for every \(c\in\CC\) the object \(V(c)\) is an arrow
\[
\varepsilon_c\colon R(c)\longrightarrow c.
\]
For \(f\colon c\to d\), the morphism \(V(f)\) is a commutative square
\[
\begin{tikzcd}
R(c) \arrow[r, "\varepsilon_c"] \arrow[d, "R(f)"']
&
c \arrow[d, "f"]
\\
R(d) \arrow[r, "\varepsilon_d"']
&
d.
\end{tikzcd}
\]
Thus, \(R\) is an endofunctor and
\(\varepsilon\colon R\Rightarrow1_\CC\) is natural. Conversely, any such
transformation defines a section of \(\pi_R\). The left case is dual.
\end{proof}

Thus, right and left vector fields describe directed \textit{deformations} of the
identity in opposite orientations.

\subsection{Fixed objects}
\label{subsec:fixed_objects}

\begin{definition}\label{def:fixed_object}
Let \(V\) be a right or left vector field on \(\CC\). An object
\(c\in\CC\) is \emph{fixed by \(V\)} if
\[
V(c)=1_c
\]
in the corresponding tangent category.

Equivalently, if \(V=(R,\varepsilon)\) is right, then
\(R(c)=c\) and \(\varepsilon_c=1_c\); if \(V=(L,\eta)\) is left, then
\(L(c)=c\) and \(\eta_c=1_c\).
\end{definition}

We write \(\operatorname{Fix}(V)\) for the full subcategory determined by
the fixed objects. When the vector field is represented by
\((R,\varepsilon)\) or \((L,\eta)\) we also write
\(\operatorname{Fix}(R)\) or \(\operatorname{Fix}(L)\), respectively.

\begin{proposition}\label{prop:fixed_subcategory}
The endofunctor associated with a vector field restricts to the identity
on its fixed subcategory.
\end{proposition}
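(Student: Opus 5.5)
We treat the right case $V=(R,\varepsilon)$; the left case $V=(L,\eta)$ follows by the dual argument, using $\eta\colon 1_\CC\Rightarrow L$ in place of $\varepsilon$ (or by applying the right case to $\CC^{\op}$). By Definition~\ref{def:fixed_object}, every fixed object $c$ satisfies $R(c)=c$ and $\varepsilon_c=1_c$, so $R$ already acts as the identity on objects of $\operatorname{Fix}(V)$. What remains is to show that $R(f)=f$ for every morphism $f\colon c\to d$ between fixed objects. Since $\operatorname{Fix}(V)$ is full, these are exactly the morphisms of $\operatorname{Fix}(V)$. It also follows that $R$ maps $\operatorname{Fix}(V)$ into itself.

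The key step is the naturality square from Proposition~\ref{prop:vector_fields_natural_transformations}:
\[
f\circ\varepsilon_c=\varepsilon_d\circ R(f).
\]
Substituting $\varepsilon_c=1_c$, $\varepsilon_d=1_d$, $R(c)=c$ and $R(d)=d$ gives $f=R(f)$. Hence the restriction of $R$ to $\operatorname{Fix}(V)$ is the identity functor.

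In the left case, naturality of $\eta$ gives $\eta_d\circ f=L(f)\circ\eta_c$, and substituting $\eta_c=1_c$, $\eta_d=1_d$ yields $L(f)=f$ in the same way.

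No step is a genuine obstacle; the argument is a direct substitution into the naturality condition. The only point needing care is to record that $R(c)=c$, so that $R(f)$ has the same domain and codomain as $f$ and the comparison makes sense.
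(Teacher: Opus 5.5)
Your proposal is correct and follows the paper's own argument: you apply naturality of \(\varepsilon\) to a morphism \(f\colon c\to d\) between fixed objects, substitute \(\varepsilon_c=1_c\) and \(\varepsilon_d=1_d\) to get \(R(f)=f\), and treat the left case dually. Your explicit check that \(R(c)=c\), so that \(R(f)\) and \(f\) are parallel, is a small detail the paper leaves implicit.
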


\begin{proof}
Suppose first that \(V=(R,\varepsilon)\) is a right vector field. If
\(f\colon c\to d\) is a morphism between fixed objects, naturality gives
\[
f\varepsilon_c=\varepsilon_dR(f).
\]
Since \(\varepsilon_c=1_c\) and \(\varepsilon_d=1_d\), we obtain
\(R(f)=f\). The left case is dual.
\end{proof}

\subsection{Examples}

\begin{example}\label{ex:identity_vector_field}
The identity transformation of \(1_\CC\) defines both a right and a left
vector field, and every object is fixed.
\end{example}

\begin{example}\label{ex:initial_vector_field}
If \(\CC\) has an initial object \(c_0\), the unique morphisms
\(c_0\to c\) define a right vector field
\[
\overline{c_0}\Rightarrow1_\CC.
\]
Dually, a terminal object \(c_1\) determines a left vector field
\[
1_\CC\Rightarrow\overline{c_1}.
\]
\end{example}

\begin{example}\label{ex:simple_vector_field}
Let \(\mathcal D\) be the free category generated by
\[
\begin{tikzcd}[column sep=3em]
a
  \arrow[r, "h"]
&
b
  \arrow[r, "f", bend left=35]
  \arrow[r, "g"', bend right=35]
&
c.
\end{tikzcd}
\]
In particular, \(fh\) and \(gh\) are distinct morphisms from \(a\) to
\(c\).

Define \(R\colon\mathcal D\to\mathcal D\) by
\[
R(a)=R(b)=a, \text{ }
R(c)=c,
\text{ }
R(h)=1_a,
\text{ }
R(f)=fh,
\text{ }
R(g)=gh.
\]
The components
\[
\varepsilon_a=1_a,
\qquad
\varepsilon_b=h,
\qquad
\varepsilon_c=1_c
\]
define a natural transformation
\(\varepsilon\colon R\Rightarrow1_{\mathcal D}\).

Its fixed objects are \(a\) and \(c\). Hence
\[
\operatorname{Fix}(R)
=
\mathcal D|_{\{a,c\}},
\]
and this full subcategory contains the distinct parallel morphisms
\(fh,gh\colon \allowbreak a\to c\).
\end{example}

\section{Retracts and vector fields}
\label{sec:homotopy_retracts_vector_fields}

A vector field describes a one-step directed deformation of the identity,
but it need not define a retraction. We shall show
that, for finite acyclic categories, iteration eventually produces a
retract with suitable directed homotopical properties. We first introduce
the two notions needed to make this correspondence precise.

A full subcategory \(\Uc\subseteq\CC\) is a \emph{right directed
deformation retract} of \(\CC\) if there are functors
\(
\iota\colon\Uc\hookrightarrow\CC,
\text{ }
r\colon\CC\longrightarrow\Uc
\)
such that \(r\iota=1_\Uc\), together with a natural transformation
\(
\alpha\colon\iota r\Rightarrow1_\CC
\)
whose restriction to \(\Uc\) is the identity. A \emph{left directed
deformation retract} is defined dually.

A right vector field \((R,\varepsilon)\) is \emph{idempotent} if
\(R^2=R\) and dually for a left vector field.

Finally, recall that a category \(\CC\) is \emph{acyclic} \cite[Chapter 10]{Kozlov} if every endomorphism is an identity
and, for distinct objects \(c,d\in\CC\), the existence of a morphism
\(c\to d\) implies that there is no morphism \(d\to c\).

\begin{lemma}\label{lem:idempotent_field_normalized}
Let \(\CC\) be acyclic and let
\((R,\varepsilon)\) be an idempotent right vector field. Then
\[
\varepsilon R=R\varepsilon=1_R.
\]
In particular, every object in the image of \(R\) is fixed.
\end{lemma}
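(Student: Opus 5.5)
For each object \(c\), the components \(\varepsilon_{R(c)}\) and \(R(\varepsilon_c)\) are both morphisms \(R^2(c)\to R(c)\). Since \(R^2=R\), each is an endomorphism of \(R(c)\). In an acyclic category every endomorphism is an identity, so both components equal \(1_{R(c)}\). This yields \(\varepsilon R=1_R\) and \(R\varepsilon=1_R\) at once, with no use of naturality beyond the typing of the components.

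In more detail, I first record the domains and codomains. The component \(\varepsilon_{R(c)}\colon R(R(c))\to R(c)\) comes from the naturality data. The morphism \(R(\varepsilon_c)\) is the image under \(R\) of \(\varepsilon_c\colon R(c)\to c\), so it runs from \(R(R(c))\) to \(R(c)\). Substituting \(R^2(c)=R(c)\), which holds because \(R^2=R\) as functors and in particular on objects, shows both are elements of \(\Hom(R(c),R(c))\). The definition of acyclicity recalled just before the statement then forces each to be \(1_{R(c)}\). Hence \(\varepsilon R\) and \(R\varepsilon\) are both the identity transformation of \(R\); note that \(1_R\) is indeed the identity of \(R\), and \(R^2=R\) makes \(\varepsilon R\) and \(R\varepsilon\) transformations \(R\Rightarrow R\).

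For the final assertion, take \(d=R(c)\) in the image of \(R\). Then \(R(d)=R^2(c)=R(c)=d\), and \(\varepsilon_d=\varepsilon_{R(c)}=1_d\) by the first part. By Definition~\ref{def:fixed_object}, \(d\) is fixed by \(R\).

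No step is a real obstacle. The only points needing care are that idempotence is applied on objects to identify the hom-sets, and that an endomorphism in an acyclic category is an identity by definition, not a consequence of the no-two-way-arrows clause.
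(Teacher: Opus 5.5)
Your proof is correct and follows the paper's argument: once \(R^2=R\), both \(\varepsilon_{R(c)}\) and \(R(\varepsilon_c)\) are endomorphisms of \(R(c)\), so acyclicity forces them to be identities. Your explicit check of the final assertion, \(R(d)=d\) and \(\varepsilon_d=1_d\) for \(d=R(c)\), is also correct; the paper leaves that step implicit.
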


\begin{proof}
For each \(c\in\CC\), both
\[
\varepsilon_{R(c)},R(\varepsilon_c)\colon R(c)\longrightarrow R(c)
\]
are endomorphisms. Since \(\CC\) is acyclic, both are identities.
\end{proof}

\begin{proposition}\label{prop:idempotent_fields_retracts}
Let \(\CC\) be acyclic. Idempotent right vector fields on \(\CC\)
correspond to right directed deformation retracts onto full
subcategories. Dually, idempotent left vector fields correspond to left
directed deformation retracts.
\end{proposition}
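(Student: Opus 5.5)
We construct explicit maps in both directions and check that they are mutually inverse; only the right case needs treatment, since the left case follows by applying it to \(\CC^{\op}\), which is again acyclic.

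\emph{From fields to retracts.} Let \((R,\varepsilon)\) be an idempotent right vector field and put \(\Uc=\operatorname{Fix}(R)\), a full subcategory. By Lemma~\ref{lem:idempotent_field_normalized}, every object \(R(c)\) is fixed, so \(R\) corestricts to a functor \(r\colon\CC\to\Uc\) with \(\iota r=R\). Proposition~\ref{prop:fixed_subcategory} says \(R\) is the identity on \(\Uc\), hence \(r\iota=1_\Uc\). Take \(\alpha=\varepsilon\colon\iota r=R\Rightarrow1_\CC\). For \(u\in\Uc\) we have \(\varepsilon_u=1_u\) by definition of fixed objects, so \(\alpha\) restricts to the identity on \(\Uc\). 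This gives a right directed deformation retract.

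\emph{From retracts to fields.} Let \((\Uc,\iota,r,\alpha)\) be a right directed deformation retract onto a full subcategory. Set \(R=\iota r\) and \(\varepsilon=\alpha\). Then \(R^2=\iota(r\iota)r=\iota r=R\), so \((R,\varepsilon)\) is an idempotent right vector field by Proposition~\ref{prop:vector_fields_natural_transformations}.

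\emph{Mutual inverseness.} The main point is to show that \(\operatorname{Fix}(R)=\Uc\).
\begin{itemize}
\item[(i)] If \(u\in\Uc\), then \(R(u)=\iota r\iota(u)=u\) and \(\varepsilon_u=\alpha_u=1_u\), so \(u\) is fixed.
\item[(ii)] Conversely, if \(c\) is fixed, then \(c=R(c)=\iota r(c)\) lies in \(\Uc\).
\end{itemize}
Hence the composite retract \(\to\) field \(\to\) retract recovers \(\Uc\), and it also recovers \(\iota\), \(r\) (as the corestriction of \(\iota r\)) and \(\alpha\). The other composite returns \(\iota r=R\) and \(\alpha=\varepsilon\), so it is literally the identity.

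The only delicate point is in the first direction: we need the image of \(R\) to consist of fixed objects, since otherwise \(r\) would not land in \(\operatorname{Fix}(R)\). Acyclicity enters only here, through Lemma~\ref{lem:idempotent_field_normalized}. The reverse direction does not need acyclicity.
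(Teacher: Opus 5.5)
Your proof is correct and follows the paper's argument. In one direction you corestrict \(R\) to \(\operatorname{Fix}(R)\) via Lemma~\ref{lem:idempotent_field_normalized} and Proposition~\ref{prop:fixed_subcategory}, and in the other you set \(R=\iota r\). Your extra check that \(\operatorname{Fix}(\iota r)=\Uc\), which makes the two constructions mutually inverse, is left implicit in the paper and is a worthwhile addition.
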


\begin{proof}
Let \((R,\varepsilon)\) be an idempotent right vector field. By
Lemma~\ref{lem:idempotent_field_normalized}, \(R(c)\) is fixed for every
\(c\), so \(R\) defines by corestriction a functor
\[
r\colon\CC\longrightarrow\operatorname{Fix}(R).
\]
If \(\iota\) denotes the inclusion, then
\(R=\iota r\). Moreover, Proposition~\ref{prop:fixed_subcategory} gives
\(r\iota=1_{\operatorname{Fix}(R)}\), and \(\varepsilon\) becomes a
natural transformation
\[
\iota r\Rightarrow1_\CC
\]
relative to \(\operatorname{Fix}(R)\).

Conversely, a right directed deformation retract
\[
\Uc
\mathrel{\substack{\xrightarrow{\iota}\\[-1mm]\xleftarrow[r]{}}}
\CC
\]
defines the endofunctor \(R=\iota r\). Since \(r\iota=1_\Uc\),
\[
R^2=\iota r\iota r=\iota r=R,
\]
and the transformation \(\iota r\Rightarrow1_\CC\) makes \(R\) an
idempotent right vector field. The left statement is dual.
\end{proof}

Thus, directed retracts correspond to vector fields that have already
stabilized. Different vector fields may nevertheless stabilize to the
same retract.

For a right vector field \((R,\varepsilon)\) and \(n\geq1\), define
\[
\varepsilon^{(n)}\colon R^n\Rightarrow1_\CC
\]
recursively by
\[
\varepsilon^{(1)}=\varepsilon,
\qquad
\varepsilon^{(n+1)}
=
\varepsilon^{(n)}\circ(\varepsilon R^n).
\]
Its component at \(c\) is
\[
R^n(c)
\xrightarrow{\varepsilon_{R^{n-1}(c)}}
R^{n-1}(c)
\longrightarrow\cdots\longrightarrow
R(c)
\xrightarrow{\varepsilon_c}
c.
\]

\begin{theorem}\label{thm:vector_field_stabilization}
Let \(\CC\) be a finite acyclic category and let
\((R,\varepsilon)\) be a right vector field. There exists \(N\geq1\) such
that \(R^N\) is idempotent, takes values in
\(\operatorname{Fix}(R)\), and determines a right directed deformation
retract onto \(\operatorname{Fix}(R)\).
\end{theorem}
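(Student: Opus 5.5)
The plan is to track, for each object \(c\), the chain of components of \(\varepsilon^{(n)}\),
\[
\cdots\longrightarrow R^{n}(c)\xrightarrow{\varepsilon_{R^{n-1}(c)}}R^{n-1}(c)\longrightarrow\cdots\longrightarrow R(c)\xrightarrow{\varepsilon_c}c,
\]
and to use finiteness and acyclicity to force it to become constant at a fixed object. Once this is done, we take \(N\) to be the number of objects of \(\CC\). We then verify directly that \(R^N\) is idempotent and that \(\varepsilon^{(N)}\) satisfies the hypotheses of Proposition~\ref{prop:idempotent_fields_retracts}. Finally, we identify \(\operatorname{Fix}(R^N)\) with \(\operatorname{Fix}(R)\).

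\emph{Step 1 (stabilization of objects).} Acyclicity makes the relation ``there is a morphism \(d\to c\)'' a partial order on objects. If \(R^{k+1}(c)=R^k(c)\), then \(\varepsilon_{R^k(c)}\) is an endomorphism, hence an identity, so \(R^k(c)\) is fixed by \(R\). Being fixed, it stays fixed under all further iterations. Otherwise \(R^{k+1}(c)\) lies strictly below \(R^k(c)\) in this order. A strictly descending chain in a finite poset has length less than \(|\operatorname{Ob}\CC|\), so \(R^N(c)\in\operatorname{Fix}(R)\) for every \(c\).

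\emph{Step 2 (idempotence).} For any morphism \(f\), \(R^N(f)\) is a morphism between fixed objects. By Proposition~\ref{prop:fixed_subcategory}, \(R\) restricts to the identity on \(\operatorname{Fix}(R)\), so \(R^N(R^N(f))=R^N(f)\). Hence \(R^{2N}=R^N\), and \(R^N\) takes values in \(\operatorname{Fix}(R)\). The transformation \(\varepsilon^{(N)}\colon R^N\Rightarrow1_\CC\) is natural, being a vertical composite of the whiskerings \(\varepsilon R^k\). Its component at a fixed object is a composite of identities. Thus \((R^N,\varepsilon^{(N)})\) is an idempotent right vector field, and Proposition~\ref{prop:idempotent_fields_retracts} yields a right directed deformation retract onto \(\operatorname{Fix}(R^N)\).

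\emph{Step 3 (comparison of fixed subcategories).} This is the only point needing care. If \(c\) is fixed by \(R\), it is clearly fixed by \(R^N\) with \(\varepsilon^{(N)}_c=1_c\). Conversely, suppose \(R^N(c)=c\). Then the chain above starts and ends at \(c\), so \(c\) and \(R(c)\) admit morphisms in both directions. By acyclicity this forces \(R(c)=c\), and then \(\varepsilon_c\) is an endomorphism, hence \(1_c\). Therefore \(\operatorname{Fix}(R^N)=\operatorname{Fix}(R)\), and this subcategory is exactly the image of \(R^N\), which completes the argument.
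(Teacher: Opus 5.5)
Your proposal is correct and follows essentially the same route as the paper. Strictly descending chains of \(\varepsilon\)-components force every \(R^N(c)\) into \(\operatorname{Fix}(R)\), idempotence follows from \(R\) being the identity on \(\operatorname{Fix}(R)\), the directed-cycle argument gives \(\operatorname{Fix}(R^N)=\operatorname{Fix}(R)\), and Proposition~\ref{prop:idempotent_fields_retracts} applied to \(\varepsilon^{(N)}\) finishes; the only differences are cosmetic (you fix \(N=|\operatorname{Ob}\CC|\) and check \(R^{2N}=R^N\) directly, whereas the paper shows \(R^{N+1}=R^N\) and inducts).
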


\begin{proof}
If \(c\) is not fixed, then \(\varepsilon_c\colon R(c)\to c\) is
non-identity: otherwise \(R(c)=c\), and acyclicity would force
\(\varepsilon_c=1_c\). Thus, until a fixed object is reached, iteration
produces a strictly descending chain
\[
\cdots\longrightarrow R^2(c)\longrightarrow R(c)\longrightarrow c.
\]
No object can occur twice, since that would produce a directed cycle.
Finiteness therefore implies that every such chain reaches a fixed
object, and their lengths admit a common bound \(N\geq1\).

Hence \(R^N(c)\) is fixed for every \(c\). Since \(R\) restricts to the
identity on its fixed subcategory,
\[
R^{N+1}=R^N.
\]
It follows inductively that \(R^{N+k}=R^N\) for every \(k\geq0\).
Taking \(k=N\), we obtain \((R^N)^2=R^N\).

It remains to identify the fixed objects. Clearly,
\(\operatorname{Fix}(R)\subseteq\operatorname{Fix}(R^N)\). Conversely,
if \(R^N(c)=c\), the iterated transformation gives a directed cycle from
\(c\) to itself. Acyclicity forces every morphism in this cycle to be an
identity, so \(c\) is already fixed by \(R\). Thus
\[
\operatorname{Fix}(R^N)=\operatorname{Fix}(R).
\]
The conclusion now follows from
Proposition~\ref{prop:idempotent_fields_retracts}, applied to
\(\varepsilon^{(N)}\colon R^N\Rightarrow1_\CC\).
\end{proof}

\subsection{Beat objects and vector fields}
\label{subsec:beat_objects_vector_fields}

The reduction of finite \(T_0\)-spaces by beat points goes back to Stong
\cite{Stong}, see also \cite{MR3024764} for a broader treatment. Tanaka extended this theory to finite acyclic categories \cite{StrongHomTan,SimpleHom_Tan}.

For \(c\in\CC\), let
\[
\partial(\CC\downarrow c)
=
(\CC\downarrow c)\setminus\{(c,1_c)\},
\]
and define \(\partial(c\downarrow\CC)\) dually. An object \(c\) is a
\emph{down beat object} if \(\partial(\CC\downarrow c)\) has a terminal
object, and an \emph{up beat object} if
\(\partial(c\downarrow\CC)\) has an initial object.

Equivalently, \(c\) is down beat if there is a morphism
\(u\colon b\to c\) through which every non-identity morphism into \(c\)
factors uniquely. For posets, these definitions recover the usual down
and up beat points.

\subsubsection{Beat-object removals as vector fields}

\begin{proposition}\label{prop:beat_object_vector_field}
Let \(\CC\) be acyclic.

\begin{enumerate}
    \item Removing a down beat object produces a right directed deformation
    retract and hence an idempotent right vector field.
    \item Removing an up beat object produces a left directed
    deformation retract and hence an idempotent left vector field.
\end{enumerate}
\end{proposition}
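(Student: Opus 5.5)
We treat only the down beat case; the up beat case follows by passing to \(\CC^{\op}\), which exchanges down and up beat objects and right and left directed deformation retracts.

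Let \(c\) be a down beat object. Let \(u\colon b\to c\) be the terminal object of \(\partial(\CC\downarrow c)\), and let \(\Uc\) be the full subcategory on the objects different from \(c\). Since \(\CC\) is acyclic, \(b\neq c\): otherwise \(u\) would be a non-identity endomorphism. We will define \(r\colon\CC\to\Uc\) on objects by \(r(c)=b\) and \(r(x)=x\) for \(x\neq c\). The morphisms then split into four kinds.

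\begin{enumerate}
\item Morphisms between objects of \(\Uc\) are sent to themselves.
\item A morphism \(f\colon x\to c\) with \(x\neq c\) is a non-identity morphism into \(c\). It therefore factors uniquely as \(f=u\bar f\), and we set \(r(f)=\bar f\).
\item Every endomorphism of \(c\) is the identity by acyclicity, so \(r(1_c)=1_b\).
\item For a morphism \(g\colon c\to y\) with \(y\neq c\), we set \(r(g)=gu\colon b\to y\).
\end{enumerate}

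The transformation \(\alpha\colon\iota r\Rightarrow1_\CC\) has components \(\alpha_x=1_x\) for \(x\neq c\) and \(\alpha_c=u\). Then \(r\iota=1_\Uc\) holds by construction, and \(\alpha\) restricts to the identity on \(\Uc\). The conclusion then follows from Proposition~\ref{prop:idempotent_fields_retracts}, which turns the retract into the idempotent right vector field \((\iota r,\alpha)\).

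The main work is checking that \(r\) is a functor and that \(\alpha\) is natural. We do this case by case on composable pairs, according to which of the objects involved equal \(c\).

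\begin{enumerate}
\item For composites \(x\xrightarrow{f}c\xrightarrow{g}y\) with \(x,y\neq c\), we must show \(r(gf)=r(g)r(f)\). Here \(gf=gu\bar f\), so \(r(gf)=gf=(gu)\bar f\), as required.
\item For \(x\xrightarrow{h}x'\xrightarrow{f}c\), the morphism \(\bar f h\) is a factorization of \(fh\) through \(u\). Uniqueness of the factorization then gives \(\overline{fh}=\bar f h\).
\item For \(c\xrightarrow{g}y\xrightarrow{k}z\), functoriality is immediate.
\item Composites that pass through \(c\) twice force identities, by acyclicity.
\end{enumerate}

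Naturality of \(\alpha\) comes down to two equations. For \(f\colon x\to c\) we need \(u\,r(f)=f\), which holds by the definition of \(\bar f\). For \(g\colon c\to y\) we need \(g u=r(g)\), which is the definition of \(r(g)\).

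The expected obstacle is this functoriality bookkeeping. The only delicate point is that the uniqueness of the factorization through \(u\) is genuinely used, namely in the second case. That uniqueness is exactly what terminality of \(u\) in \(\partial(\CC\downarrow c)\) provides.
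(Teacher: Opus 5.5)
Your proposal is correct and follows the paper's proof: the same retraction \(r\) sending \(c\) to \(b\), factoring morphisms into \(c\) uniquely through \(u\) and precomposing morphisms out of \(c\) with \(u\), the same transformation \(\alpha\) with \(\alpha_c=u\), the same appeal to Proposition~\ref{prop:idempotent_fields_retracts}, and duality for the up beat case. Your case-by-case functoriality and naturality checks, including the observations that \(b\neq c\) and that composites through \(c\) twice are trivial by acyclicity, just spell out what the paper summarizes as ``the universal property of \(u\) gives functoriality.''
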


\begin{proof}
Let \(c\) be down beat, and let \(u\colon b\to c\) be terminal in
\(\partial(\CC\downarrow c)\). Define
\[
r\colon\CC\longrightarrow\CC\setminus\{c\}
\]
by \(r(c)=b\) and \(r(x)=x\) for \(x\neq c\). If \(v\colon x\to c\) is
non-identity, set \(r(v)=\widetilde v\), where
\(v=u\widetilde v\) is its unique factorization; if \(w\colon c\to y\),
set \(r(w)=wu\). On all other morphisms, let \(r\) act as the identity.

The universal property of \(u\) gives functoriality. If \(\iota\) is the
inclusion, then \(r\iota=1\), and the components
\[
\alpha_c=u,
\qquad
\alpha_x=1_x\quad(x\neq c)
\]
define a natural transformation
\(\iota r\Rightarrow1_\CC\). The conclusion follows from
Proposition~\ref{prop:idempotent_fields_retracts}. The up beat case is
dual.
\end{proof}

\subsubsection{Vector fields as beat-object reductions}

\begin{lemma}\label{lem:minimal_nonfixed_down_beat}
Let \(\alpha\colon R\Rightarrow1_\CC\) be an idempotent right vector field
on a finite acyclic category. Every minimal non-fixed object \(c\) is down
beat, with terminal morphism
\[
\alpha_c\colon R(c)\longrightarrow c.
\]
\end{lemma}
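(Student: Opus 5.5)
The plan is to verify directly that \((R(c),\alpha_c)\) is a terminal object of \(\partial(\CC\downarrow c)\). Concretely, I will show three things: \(\alpha_c\) is a non-identity morphism, every non-identity \(v\colon x\to c\) factors as \(v=\alpha_c w\), and such a \(w\) is unique. Throughout I use the facts already available for idempotent right vector fields on acyclic categories. By Lemma~\ref{lem:idempotent_field_normalized}, \(R\alpha=\alpha R=1_R\) and every object in the image of \(R\) is fixed. By Proposition~\ref{prop:fixed_subcategory}, \(R\) is the identity on \(\operatorname{Fix}(R)\).

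First, \(\alpha_c\) lies in \(\partial(\CC\downarrow c)\). If \(\alpha_c\) were an identity, then \(R(c)=c\) and \(\alpha_c=1_c\), so \(c\) would be fixed, which it is not. (Acyclicity also forces \(R(c)\neq c\), since otherwise \(\alpha_c\) would be a non-identity endomorphism.)

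Next, take a non-identity \(v\colon x\to c\). I read "minimal non-fixed" as: no non-fixed object \(x\neq c\) admits a morphism to \(c\). This is the step where the precise meaning of minimality matters, so I will state this reading explicitly. Acyclicity gives \(x\neq c\), because a non-identity endomorphism is impossible. Minimality then forces \(x\) to be fixed, that is, \(R(x)=x\) and \(\alpha_x=1_x\). Naturality of \(\alpha\) at \(v\) reads
\[
v\,\alpha_x=\alpha_c\,R(v),
\]
so \(v=\alpha_c R(v)\), with \(R(v)\colon x\to R(c)\). This gives existence of the factorization.

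For uniqueness, suppose \(v=\alpha_c w\) with \(w\colon x\to R(c)\). Applying \(R\) and using \(R(\alpha_c)=1_{R(c)}\) from Lemma~\ref{lem:idempotent_field_normalized} gives \(R(v)=R(w)\). Both \(x\) and \(R(c)\) are fixed, the latter because it lies in the image of \(R\). Hence Proposition~\ref{prop:fixed_subcategory} gives \(R(w)=w\), and therefore \(w=R(v)\). Thus every object of \(\partial(\CC\downarrow c)\) has exactly one morphism to \((R(c),\alpha_c)\), which makes \(\alpha_c\) terminal and \(c\) a down beat object.

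I expect the only delicate point to be the use of minimality to guarantee that the source \(x\) of every non-identity arrow into \(c\) is fixed. Once that is in place, naturality and the normalization \(R\alpha=1_R\) do the rest.
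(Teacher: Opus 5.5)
Your proposal is correct and follows essentially the same argument as the paper. Minimality makes the domain of every non-identity arrow into \(c\) fixed, naturality then gives \(v=\alpha_c R(v)\), and applying \(R\) with \(R(\alpha_c)=1_{R(c)}\) yields uniqueness. Your explicit check that \(\alpha_c\) is non-identity, so that \((R(c),\alpha_c)\) actually lies in \(\partial(\CC\downarrow c)\), is a detail the paper leaves implicit.
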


\begin{proof}
Let \(g\colon x\to c\) be non-identity. Then \(x<_\CC c\), so the
minimality of \(c\) implies that \(x\) is fixed. Naturality therefore gives
\(
g=\alpha_cR(g),
\)
so \(g\) factors through \(\alpha_c\).

If \(\alpha_ch=g\), applying \(R\) yields
\(
R(h)=R(g),
\)
because \(R(\alpha_c)=1_{R(c)}\). Since \(x\) and \(R(c)\) are fixed,
\(R(h)=h\), and hence \(h=R(g)\). Thus, the factorization is unique.
\end{proof}

\begin{theorem}\label{thm:right_field_down_beat_reduction}
Let \(\CC\) be a finite acyclic category. The fixed subcategory of every
right vector field is obtained by finitely many down beat-object removals.
\end{theorem}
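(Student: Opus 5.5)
By Theorem~\ref{thm:vector_field_stabilization}, given a right vector field \((R,\varepsilon)\) on \(\CC\), some power \(R^N\) with \(\varepsilon^{(N)}\) is an idempotent right vector field \(\alpha\colon R'\Rightarrow1_\CC\) satisfying \(\operatorname{Fix}(R')=\operatorname{Fix}(R)\). It therefore suffices to prove the statement for idempotent right vector fields. I will argue by induction on the number of non-fixed objects of \(R'\).

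If there are no non-fixed objects, \(\operatorname{Fix}(R')=\CC\) and no removals are needed. Otherwise, because \(\CC\) is finite and acyclic, its objects are preordered (in fact partially ordered) by the existence of morphisms, so a minimal non-fixed object \(c\) exists. By Lemma~\ref{lem:minimal_nonfixed_down_beat}, \(c\) is a down beat object with terminal morphism \(\alpha_c\colon R'(c)\to c\). Removing it gives the full subcategory \(\CC'=\CC\setminus\{c\}\), which is again finite and acyclic. The key step is to show that \(\alpha\) restricts to an idempotent right vector field on \(\CC'\) with the same fixed objects. Concretely, \(R'\) must send \(\CC'\) into \(\CC'\), and its components must stay in \(\CC'\).

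This restriction is the main obstacle. By Lemma~\ref{lem:idempotent_field_normalized}, every object in the image of \(R'\) is fixed. Since \(c\) is not fixed, \(c\notin\operatorname{im}R'\), so \(R'\) maps \(\CC'\) into \(\CC'\). Because \(\CC'\) is full, the morphisms \(R'(f)\) and the components \(\alpha_x\colon R'(x)\to x\) for \(x\neq c\) all lie in \(\CC'\), and naturality is inherited. The restriction is still idempotent, and its fixed objects are exactly those of \(R'\), since \(c\) was not fixed. It has one fewer non-fixed object, so induction shows that \(\operatorname{Fix}(R')\) is obtained from \(\CC'\) by finitely many down beat removals. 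Prepending the removal of \(c\) then completes the argument.

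One subtlety must be checked: the minimal non-fixed object \(c\) has to be taken relative to the current category at each stage. Minimality in \(\CC'\) still means that every non-identity morphism into the object comes from a fixed object, so Lemma~\ref{lem:minimal_nonfixed_down_beat} applies verbatim to \(\CC'\).
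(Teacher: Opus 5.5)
Your proposal is correct and follows the paper's proof. You stabilize via Theorem~\ref{thm:vector_field_stabilization}, remove a minimal non-fixed object (down beat by Lemma~\ref{lem:minimal_nonfixed_down_beat}), restrict the idempotent field to the remaining full subcategory, and iterate. Your explicit check that the restricted field stays inside \(\CC\setminus\{c\}\), because its image consists of fixed objects, and keeps the same fixed objects spells out the restriction step that the paper states in one line.
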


\begin{proof}
Stabilize the field by
Theorem~\ref{thm:vector_field_stabilization}. If a non-fixed object
remains, choose one that is minimal. By
Lemma~\ref{lem:minimal_nonfixed_down_beat}, it is down beat.

Remove it and restrict the stabilized field to the remaining full
subcategory. Its image still lies in the fixed subcategory, so the process
may be repeated. Finiteness gives a sequence
\[
\CC=\CC_0\searrow\CC_1\searrow\cdots\searrow\CC_n
=
\operatorname{Fix}(R)
\]
of down beat-object removals.
\end{proof}

\begin{theorem}\label{thm:left_field_up_beat_reduction}
Let \(\CC\) be a finite acyclic category. The fixed subcategory of every
left vector field is obtained by finitely many up beat-object removals.
\end{theorem}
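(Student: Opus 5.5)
The plan is to reduce to Theorem~\ref{thm:right_field_down_beat_reduction} by duality. A left vector field \((L,\eta)\) on \(\CC\), with \(\eta\colon 1_\CC\Rightarrow L\), is the same thing as a right vector field on \(\CC^{\op}\). Indeed, \(L^{\op}\colon\CC^{\op}\to\CC^{\op}\) comes with \(\eta^{\op}\colon L^{\op}\Rightarrow 1_{\CC^{\op}}\). An object is fixed by \((L,\eta)\) exactly when it is fixed by \((L^{\op},\eta^{\op})\), since \(\eta_c=1_c\) is a condition invariant under passing to the opposite. Therefore \(\operatorname{Fix}(L^{\op})=\operatorname{Fix}(L)^{\op}\) as full subcategories.

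Next I check that the hypotheses transfer. The category \(\CC^{\op}\) is again finite and acyclic, because both conditions in the definition of acyclicity are self-dual. For any \(c\), the category \(\partial(\CC^{\op}\downarrow c)\) is canonically \(\partial(c\downarrow\CC)^{\op}\). Hence a terminal object of the former is an initial object of \(\partial(c\downarrow\CC)\). So \(c\) is a down beat object of \(\CC^{\op}\) if and only if it is an up beat object of \(\CC\). Removing \(c\) commutes with taking opposites, since both operations are full-subcategory constructions on the same object set.

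Now I apply Theorem~\ref{thm:right_field_down_beat_reduction} to \((L^{\op},\eta^{\op})\) on \(\CC^{\op}\). This gives a chain
\[
\CC^{\op}=\Dc_0\searrow\Dc_1\searrow\cdots\searrow\Dc_n=\operatorname{Fix}(L)^{\op}
\]
of down beat removals. Taking opposites, each \(\Dc_i^{\op}\) arises from \(\Dc_{i-1}^{\op}\) by removing an up beat object. This yields the required finite sequence from \(\CC\) to \(\operatorname{Fix}(L)\).

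The only step that needs care is the identification of comma categories under \(\op\), together with the orientation of the naturality square. There is no real obstacle. Alternatively, one could dualize the proofs directly: dualize Theorem~\ref{thm:vector_field_stabilization} to get a left directed deformation retract via \(L^N\), use the dual of Lemma~\ref{lem:minimal_nonfixed_down_beat} to show that a maximal non-fixed object \(c\) is up beat with initial morphism \(\eta_c\), and then remove such objects iteratively.
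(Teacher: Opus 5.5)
Your proposal is correct and takes the same approach as the paper, whose proof is the one-line duality ``apply Theorem~\ref{thm:right_field_down_beat_reduction} to \(\CC^{\op}\).'' You have simply made explicit what the paper leaves implicit: left vector fields on \(\CC\) are right vector fields on \(\CC^{\op}\) with the same fixed objects, and \(\partial(\CC^{\op}\downarrow c)\cong\partial(c\downarrow\CC)^{\op}\), so down beat objects of \(\CC^{\op}\) are exactly the up beat objects of \(\CC\).
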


\begin{proof}
Apply Theorem~\ref{thm:right_field_down_beat_reduction} to
\(\CC^{\op}\).
\end{proof}

\begin{corollary}\label{cor:vector_fields_beat_reductions}
On a finite acyclic category, stabilized right and left vector fields are
realized by sequences of down and up beat-object removals, respectively.
Conversely, every such oriented sequence determines an idempotent vector
field.
\end{corollary}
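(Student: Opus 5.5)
The corollary has two halves, and I would prove each by assembling results already established in this section.

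For the forward half, take a right vector field $(R,\varepsilon)$ on the finite acyclic category $\CC$. Theorem~\ref{thm:vector_field_stabilization} gives $N$ such that $(R^N,\varepsilon^{(N)})$ is idempotent, with $\operatorname{Fix}(R^N)=\operatorname{Fix}(R)$. Theorem~\ref{thm:right_field_down_beat_reduction} then yields a finite chain of down beat-object removals $\CC=\CC_0\searrow\cdots\searrow\CC_n=\operatorname{Fix}(R)$. The left case follows in the same way from Theorem~\ref{thm:left_field_up_beat_reduction}, or by passing to $\CC^{\op}$.

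Here I would add one check so that ``realized'' means more than equality of fixed subcategories. Composing the retractions $r_i\colon\CC_{i-1}\to\CC_i$ of Proposition~\ref{prop:beat_object_vector_field} gives a retraction $\CC\to\operatorname{Fix}(R)$. This retraction should agree with the corestriction of $R^N$. The reason is that, on a finite acyclic category, a right directed deformation retract onto a fixed full subcategory $\Uc$ is unique. Suppose $\alpha\colon\iota r\Rightarrow 1$ and $\alpha'\colon\iota r'\Rightarrow 1$ are two such retracts. Naturality of $\alpha'$ applied to $\alpha_c$ gives $\alpha'_c\,r'(\alpha_c)=\alpha_c\,\alpha'_{r(c)}$. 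Since $r(c)\in\Uc$, we have $\alpha'_{r(c)}=1$, so $\alpha_c$ factors through $\alpha'_c$; symmetrically, $\alpha'_c$ factors through $\alpha_c$. Acyclicity then forces $r(c)=r'(c)$ and, with a little more care, $r=r'$ and $\alpha=\alpha'$. I expect this uniqueness point to be the only genuinely delicate step. If it proves awkward, I would fall back on interpreting ``realized'' as equality of the target subcategories, which the cited theorems already give.

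For the converse, take a sequence of down beat-object removals $\CC_0\searrow\cdots\searrow\CC_n$. Each step comes with a right directed deformation retract $(\iota_i,r_i,\alpha_i)$ by Proposition~\ref{prop:beat_object_vector_field}. I would show that right directed deformation retracts compose. Set $r=r_n\cdots r_1$ and let $\iota$ be the full inclusion $\CC_n\subseteq\CC$; then $r\iota=1$ follows from $r_i\iota_i=1$ at each stage. The transformation $\iota r\Rightarrow 1_\CC$ is built by induction as a vertical composite: the whiskered later transformation followed by the earlier one. For two steps this is $\alpha_1\circ(\iota_1\alpha_2 r_1)$. Its restriction to $\CC_n$ is the identity, since each $\alpha_i$ restricts to the identity on $\CC_i\supseteq\CC_n$.

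Proposition~\ref{prop:idempotent_fields_retracts} then converts this composite retract into an idempotent right vector field with fixed subcategory $\CC_n$. The up beat case is dual.
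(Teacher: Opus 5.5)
Your proposal is correct and follows the route the paper intends. The forward half is Theorems~\ref{thm:vector_field_stabilization}, \ref{thm:right_field_down_beat_reduction} and~\ref{thm:left_field_up_beat_reduction}. The converse is Proposition~\ref{prop:beat_object_vector_field} followed by Proposition~\ref{prop:idempotent_fields_retracts}, where you make explicit the composite transformation $\alpha_1\circ(\iota_1\alpha_2r_1)$ that the paper leaves implicit.

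Your optional uniqueness argument also works. For morphisms, apply $r$ to the naturality square of $\alpha'$ at $f$; once $r=r'$ on objects, $r(\alpha'_c)$ and $r(\alpha'_d)$ are endomorphisms and hence identities. This upgrades ``realized'' to an equality of retractions and transformations, and it is consistent with Remark~\ref{rem:fields_more_than_beat_sequence}.
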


\begin{remark}\label{rem:fields_more_than_beat_sequence}
This correspondence is not bijective: different vector fields may
stabilize to the same retract, and a retract may admit several
factorizations by beat-object removals. A vector field records the
reduction functorially through a single natural transformation, whereas a
beat-object sequence records one factorization of the stabilized retract.
\end{remark}

\subsection{An example}
\label{subsec:complete_vector_field_example}

\begin{example}\label{ex:beat-point-vector-field}
Let \(\mathcal D\) be the category generated by
\[
\begin{tikzcd}[column sep=3em]
p \arrow[r, "k"]
&
a \arrow[r, "h"]
&
b
  \arrow[r, "f_1", bend left=45]
  \arrow[r, "f_2" description]
  \arrow[r, "f_3"', bend right=45]
&
c
\end{tikzcd}
\]
subject to \(f_1h=f_3h\). The objects \(a\) and \(b\) are down beat, with
terminal morphisms \(k\colon p\to a\) and \(h\colon a\to b\),
respectively.

Define \(R\colon\mathcal D\to\mathcal D\) by
\[
R(p)=R(a)=p,
\qquad
R(b)=a,
\qquad
R(c)=c,
\]
and
\[
R(k)=1_p,
\qquad
R(h)=k,
\qquad
R(f_i)=f_ih
\quad (i=1,2,3).
\]
The components
\[
\varepsilon_p=1_p,
\qquad
\varepsilon_a=k,
\qquad
\varepsilon_b=h,
\qquad
\varepsilon_c=1_c
\]
define a right vector field
\[
\varepsilon\colon R\Rightarrow1_{\mathcal D}.
\]

Its fixed subcategory is
\[
\operatorname{Fix}(R)
=
\mathcal D|_{\{p,c\}}.
\]
Since \(R^2(a)=R^2(b)=p\), the endofunctor \(R^2\) takes values in
\(\operatorname{Fix}(R)\) and determines the associated right directed
deformation retract through
\[
\varepsilon^{(2)}
\colon
R^2\Rightarrow1_{\mathcal D}.
\]
Thus, the single vector field \(R\) encodes the successive down
beat-object reductions
\[
b\longmapsto a\longmapsto p,
\]
showing that stabilization may require more than one iteration.
\end{example}

\section{Homotopy theory and categorical flows}
\label{sec:homotopy_fluxes}

Natural transformations have long been interpreted as the building blocks for homotopies between functors \cite{LEEHomotopy}. Their intrinsic orientation leads to the directed homotopy theory studied by Grandis \cite{GrandisShape}. We recall the homotopy relations and lifting properties needed below. Finally, we introduce categorical flows as successive vector-field reductions on decreasing full subcategories.

\subsection{Strong and directed homotopies}
\label{subsec:strong_directed_homotopies}

Let \(F,G\colon\CC\to\Dc\) be functors. We write
\[
F\leq_dG
\]
if there is a natural transformation \(F\Rightarrow G\). This defines a
preorder compatible with composition of functors.

\begin{definition}\label{def:directed_homotopy}
A \emph{right directed homotopy} from \(F\) to \(G\) is a natural
transformation
\(
F\Rightarrow G,
\)
whereas a \emph{left directed homotopy} from \(F\) to \(G\) is a natural
transformation
\(
G\Rightarrow F.
\)
\end{definition}

Equivalently, a right directed homotopy is a functor
\(H\colon\CC\times\mathbb I_1\to\Dc\) whose restrictions to \(0\) and
\(1\) are \(F\) and \(G\), respectively.

\begin{definition}\label{def:strong_homotopy}
Two functors \(F,G\colon\CC\to\Dc\) are \emph{strongly homotopic},
written \(F\simeq_sG\), if they are connected by a finite zigzag of
natural transformations
\[
F=H_0
\leftrightsquigarrow
H_1
\leftrightsquigarrow
\cdots
\leftrightsquigarrow
H_n=G
\]
where \(\leftrightsquigarrow\) means either \(\Rightarrow\) or \(\Leftarrow\).
\end{definition}

\begin{remark}\label{rem:Homotopies_zig-zag}
By inserting identity transformations and composing consecutive
transformations with the same orientation, every strong homotopy may be
represented by an alternating zigzag
\[
H_0\Rightarrow H_1\Leftarrow H_2\Rightarrow\cdots
\leftrightsquigarrow H_n.
\]
\end{remark}

Thus, every directed homotopy is a strong homotopy, but a strong homotopy
need not have a uniform orientation.

We now recall connectedness and give an equivalent description of strong
homotopies.

For \(n\geq0\), let \(\mathbb I_n\) denote the category generated by the
alternating zigzag
\[
0
\longrightarrow
1
\longleftarrow
2
\longrightarrow
\cdots
\leftrightsquigarrow
n.
\]
Thus, for \(0\leq i<n\), the generating morphism is
\(i\to i+1\) when \(i\) is even and \(i+1\to i\) when \(i\) is odd.

A \emph{zigzag} from an object \(c\) to an object \(d\) of a category
\(\CC\) is a functor
\[
I\colon\mathbb I_n\longrightarrow\CC
\]
for some \(n\geq0\), such that \(I(0)=c\) and \(I(n)=d\). A category
\(\CC\) is \emph{connected} if it is non-empty and any two of its objects
are joined by a zigzag.

Being joined by a zigzag is an equivalence relation on
\(\operatorname{Ob}(\CC)\). Its equivalence classes determine the
\emph{connected components} of \(\CC\), and their set is denoted by
\(\pi_0(\CC)\); see
\cite[Chapter~IX, Section~3]{MacLaneCategories}.

\begin{proposition}
Two functors \(F,G\colon\CC\to\Dc\) are strongly homotopic if and only
if there exist \(m\geq0\) and a functor
\[
H\colon\CC\times\mathbb I_m\longrightarrow\Dc
\]
such that \(H(-,0)=F\) and \(H(-,m)=G\). Equivalently, \(F\) and \(G\)
belong to the same connected component of the functor category
\([\CC,\Dc]\).
\end{proposition}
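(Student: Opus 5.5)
The plan is to use the exponential law \([\CC\times\mathbb I_m,\Dc]\cong[\mathbb I_m,[\CC,\Dc]]\). Under it, a functor \(H\colon\CC\times\mathbb I_m\to\Dc\) becomes a functor \(\widehat H\colon\mathbb I_m\to[\CC,\Dc]\). Its values on objects are the functors \(H_i=H(-,i)\). Its values on the generating arrows of \(\mathbb I_m\) are natural transformations \(H_i\Rightarrow H_{i+1}\) for \(i\) even and \(H_{i+1}\Rightarrow H_i\) for \(i\) odd. The key point is that \(\mathbb I_m\) is the free category on its alternating generating graph; it has no relations, since no two generators are composable in a nontrivial way except through identities. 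Hence a functor out of \(\mathbb I_m\) is exactly an assignment of objects and of arrows to the generators, with no further conditions. This gives the equivalence between the second and third descriptions directly: a functor \(H\) with \(H(-,0)=F\) and \(H(-,m)=G\) is precisely a zigzag in \([\CC,\Dc]\) from \(F\) to \(G\) in the sense defined above.

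For the equivalence with strong homotopy, I will invoke Remark~\ref{rem:Homotopies_zig-zag}. Any finite zigzag of natural transformations can be normalized to an alternating one that starts with \(\Rightarrow\). Two tools suffice. First, if the zigzag starts with \(\Leftarrow\), insert the identity transformation \(F\Rightarrow F\) at the front. Second, merge consecutive transformations with the same orientation by vertical composition. If \(m=0\), the case \(F=G\) is covered by \(H(c,0)=F(c)\). The result is an alternating zigzag of some length \(m\), hence a functor \(\mathbb I_m\to[\CC,\Dc]\), and transposing gives \(H\). Conversely, a functor \(H\colon\CC\times\mathbb I_m\to\Dc\) with the prescribed ends restricts along the generators to natural transformations \(H_i\leftrightsquigarrow H_{i+1}\), which is a strong homotopy by Definition~\ref{def:strong_homotopy}.

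Finally, connected components. By the definition recalled above, \(F\) and \(G\) lie in the same component of \([\CC,\Dc]\) exactly when there is a zigzag \(\mathbb I_m\to[\CC,\Dc]\) from \(F\) to \(G\), which by the first paragraph is the functor \(H\).

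The only step that needs care is the claim that \(\mathbb I_m\) is free on the alternating generators, so that functors out of it are determined freely by the generators. I would verify this by listing its morphisms: they are the identities together with one arrow per generator, because arrows always point from even to odd indices and so no nontrivial composites exist. The remaining steps are routine currying and the bookkeeping of orientations.
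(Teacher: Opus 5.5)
Your proof is correct and follows the paper's route. The paper's proof only cites the normalization in Remark~\ref{rem:Homotopies_zig-zag}. You supply the two ingredients that citation leaves implicit: the currying \([\CC\times\mathbb I_m,\Dc]\cong[\mathbb I_m,[\CC,\Dc]]\), and the observation that \(\mathbb I_m\) has no non-identity composites (even objects are sources, odd ones sinks), so functors out of it are freely determined by the generators.
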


\begin{proof}
    It is a consequence of Remark~\ref{rem:Homotopies_zig-zag}.
\end{proof}
With all of this we can define the notion of \emph{strong homotopy equivalence} and also of \emph{strong retract}. A functor \(F\colon\CC\to\Dc\) is a \emph{strong homotopy equivalence}
if there exists \(G\colon\Dc\to\CC\) such that
\[
GF\simeq_s1_\CC
\qquad\text{and}\qquad
FG\simeq_s1_\Dc.
\]
A full subcategory \(\Uc\subseteq\CC\) is a \emph{strong deformation
retract} of \(\CC\) if there are functors
\[
\iota\colon\Uc\hookrightarrow\CC,
\qquad
r\colon\CC\longrightarrow\Uc
\]
such that
\[
r\iota=1_\Uc
\qquad\text{and}\qquad
\iota r\simeq_s1_\CC.
\]
\subsection{Strong and directed fibrations}
\label{subsec:strong_directed_fibrations}

We recall the lifting properties introduced in
\cite{carcaciacampos2026weakstrongfibrationsfunctors,
DirectedHomotopyDatabases}.

\begin{definition}\label{def:right_directed_fibration_recall}
A functor \(P\colon\E\to\Ba\) is a \emph{right directed fibration} if,
for every natural transformation
\(
\alpha\colon F\Rightarrow G
\)
between functors \(F,G\colon\CC\to\Ba\), every lift
\(\widetilde F\colon\CC\to\E\) of \(F\) extends to a lift
\(\widetilde G\colon\CC\to\E\) of \(G\) and a natural transformation
\(
\widetilde\alpha\colon\widetilde F\Rightarrow\widetilde G
\)
such that \(P\widetilde\alpha=\alpha\).

A \emph{left directed fibration} is defined dually, by prescribing a lift
of \(G\).
\end{definition}

\begin{definition}\label{def:strong_fibration_recall}
A functor \(P\colon\E\to\Ba\) is a \emph{strong fibration} if every finite
zigzag of natural transformations in \(\Ba\) can be lifted after choosing
a lift of any one of its terms.
\end{definition}

\begin{proposition}\label{prop:bifibration_strong_fibration}
Every functor that is both a right and a left directed fibration is a
strong fibration.
\end{proposition}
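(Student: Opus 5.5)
The plan is an induction on the length of the zigzag, using Remark~\ref{rem:Homotopies_zig-zag} to reduce to single natural transformations. Let \(P\colon\E\to\Ba\) be both a right and a left directed fibration. Fix a finite zigzag
\[
H_0\leftrightsquigarrow H_1\leftrightsquigarrow\cdots\leftrightsquigarrow H_n
\]
of functors \(\CC\to\Ba\), an index \(k\) with \(0\leq k\leq n\), and a lift \(\widetilde H_k\colon\CC\to\E\) of \(H_k\), so that \(P\widetilde H_k=H_k\). We need lifts \(\widetilde H_i\) of every \(H_i\) and lifts of every transformation in the zigzag, connecting the \(\widetilde H_i\) with the same orientations and agreeing with the chosen \(\widetilde H_k\).

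First we propagate the lift to the right of \(k\), one step at a time. Suppose \(\widetilde H_i\) has been constructed for some \(i\geq k\). If the \(i\)-th step is \(\alpha\colon H_i\Rightarrow H_{i+1}\), we already have a lift of its source. The right directed fibration property then gives \(\widetilde H_{i+1}\) and \(\widetilde\alpha\colon\widetilde H_i\Rightarrow\widetilde H_{i+1}\) with \(P\widetilde\alpha=\alpha\). If instead the step is \(\beta\colon H_{i+1}\Rightarrow H_i\), we have a lift of its target. The left directed fibration property, which prescribes a lift of the target, then gives \(\widetilde H_{i+1}\) and \(\widetilde\beta\colon\widetilde H_{i+1}\Rightarrow\widetilde H_i\) over \(\beta\).

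Next we propagate to the left of \(k\) in the same way, going from \(\widetilde H_i\) to \(\widetilde H_{i-1}\). Here a transformation \(H_{i-1}\Rightarrow H_i\) has a known target lift, so we use the left property. A transformation \(H_i\Rightarrow H_{i-1}\) has a known source lift, so we use the right property.

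Each step only ever extends a previously fixed lift and never modifies it. The pieces therefore assemble into a lifted zigzag in \(\E\) that lies over the original zigzag and contains \(\widetilde H_k\), and a finite induction on \(n-k\) and on \(k\) finishes the proof.

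The only real obstacle is making sure the left directed fibration property is read correctly. "Dually, by prescribing a lift of \(G\)" means: given \(\alpha\colon F\Rightarrow G\) and a lift \(\widetilde G\) of \(G\), there exist a lift \(\widetilde F\) of \(F\) and \(\widetilde\alpha\colon\widetilde F\Rightarrow\widetilde G\) over \(\alpha\). With that reading, each arrow of the zigzag is handled by exactly one of the two properties according to which endpoint is already lifted.
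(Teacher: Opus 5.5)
Your argument is correct and follows the same route as the paper, written out in more detail: starting from the chosen lift $\widetilde H_k$, you propagate outward one arrow at a time. You use the right directed fibration property when the source of the next arrow is already lifted, and the left property when its target is. The appeal to Remark~\ref{rem:Homotopies_zig-zag} is unnecessary, and composing consecutive arrows would even discard intermediate terms that must be lifted, but your step-by-step construction never actually relies on it.
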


\begin{proof}
Starting from a chosen lift, lift the transformations in the zigzag
successively, using the right or left lifting property according to their
orientation.
\end{proof}

\subsection{Vector fields and categorical flows}
\label{subsec:categorical_flows}
Flows and semiflows on finite \(T_0\)-spaces, equivalently finite
posets, have been studied by Chocano and collaborators from the
viewpoint of combinatorial dynamics. Every flow on an Alexandroff
\(T_0\)-space is trivial
\cite{ChocanoFlowsAlexandroff}, whereas non-trivial semiflows on a
finite \(T_0\)-space exist precisely in the presence of down beat
points and are closely related to strong deformation retracts
\cite{ChocanoSemiflows}.

Our use of the term \emph{flow} is discrete and categorical rather than
dynamical: it refers to a finite sequence of vector-field reductions,
each applied to the fixed subcategory obtained at the preceding stage.
The analogy lies in the fact that both constructions describe an
evolution towards smaller homotopy-equivalent models.

A vector field provides a one-step deformation of the identity. On a
finite acyclic category, its stabilization produces a directed
deformation retract onto the corresponding fixed subcategory. This
allows vector fields to be applied successively.
\begin{definition}\label{def:categorical_flow}
Let \(\CC\) be a finite acyclic category. A \emph{categorical flow} of
length \(n\) on \(\CC\) consists of a sequence
\[
\CC=\CC_0
\supseteq
\CC_1
\supseteq
\cdots
\supseteq
\CC_n
\]
of full subcategories and, for every \(1\leq k\leq n\), a right or left
vector field \(V_k\) on \(\CC_{k-1}\) such that
\[
\CC_k=\operatorname{Fix}(V_k).
\]

The category \(\CC_n\) is called the \emph{terminal category} of the
flow. The flow is \emph{right-directed} if every \(V_k\) is a right
vector field, \emph{left-directed} if every \(V_k\) is a left vector
field, and \emph{mixed} otherwise.
\end{definition}

\begin{proposition}\label{prop:flows_preserve_strong_homotopy}
Let
\[
\CC=\CC_0
\supseteq
\CC_1
\supseteq
\cdots
\supseteq
\CC_n
\]
be a categorical flow. Then, for every \(1\leq k\leq n\), the inclusion
\(
\iota_k\colon\CC_k\hookrightarrow\CC_{k-1}
\)
is a strong homotopy equivalence. Consequently, the composite inclusion
\(
\iota
=
\iota_1\cdots\iota_n
\colon
\CC_n\hookrightarrow\CC
\)
is a strong homotopy equivalence, and hence
\(
\CC_n\simeq_s\CC.
\)
\end{proposition}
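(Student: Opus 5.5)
The plan is to treat each step of the flow separately and then compose. Fix \(1\leq k\leq n\) and suppose first that \(V_k=(R_k,\varepsilon_k)\) is a right vector field on \(\CC_{k-1}\). The category \(\CC_{k-1}\) is a full subcategory of the finite acyclic category \(\CC\), so it is again finite and acyclic: its endomorphisms and morphisms between distinct objects are those of \(\CC\). Theorem~\ref{thm:vector_field_stabilization} therefore applies to \(V_k\) on \(\CC_{k-1}\). It yields \(N\geq1\) such that \(R_k^N\) is idempotent, takes values in \(\operatorname{Fix}(V_k)=\CC_k\), and determines a right directed deformation retract. Concretely, I would take
\[
r_k\colon\CC_{k-1}\longrightarrow\CC_k,
\]
the corestriction of \(R_k^N\). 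Then \(r_k\iota_k=1_{\CC_k}\) by Proposition~\ref{prop:fixed_subcategory}, since \(R_k\) and hence \(R_k^N\) is the identity on the fixed subcategory. The transformation \(\varepsilon_k^{(N)}\colon\iota_kr_k\Rightarrow1_{\CC_{k-1}}\) is a natural transformation.

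A single natural transformation is a zigzag of length one, so \(\iota_kr_k\simeq_s1_{\CC_{k-1}}\). Together with \(r_k\iota_k=1_{\CC_k}\), which gives \(r_k\iota_k\simeq_s1_{\CC_k}\) trivially, this shows that \(\iota_k\) is a strong homotopy equivalence. In fact \(\CC_k\) is a strong deformation retract of \(\CC_{k-1}\). If instead \(V_k\) is a left vector field, the dual statement applies: run Theorem~\ref{thm:vector_field_stabilization} on \(\CC_{k-1}^{\op}\), or use the left version of Proposition~\ref{prop:idempotent_fields_retracts}. This produces \(\eta^{(N)}\colon1_{\CC_{k-1}}\Rightarrow\iota_kr_k\), which is again a one-step zigzag. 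The mixed case needs nothing extra, because each step is handled independently.

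For the composite, I would show that strong homotopy equivalences are closed under composition. The key fact is that \(\simeq_s\) is compatible with pre- and post-composition by functors. Whiskering a zigzag \(H_0\leftrightsquigarrow\cdots\leftrightsquigarrow H_m\) by a functor \(K\) on either side gives a zigzag between the whiskered functors. So if \(F\colon\CC\to\Dc\) and \(F'\colon\Dc\to\E\) have quasi-inverses \(G\) and \(G'\), then
\[
GG'F'F\simeq_s G1_\Dc F=GF\simeq_s1_\CC,
\]
and symmetrically \(F'FGG'\simeq_s1_\E\). Here I use that \(\simeq_s\) is an equivalence relation, being the relation of lying in the same connected component of the functor category. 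Induction on \(n\) then shows that \(\iota=\iota_1\cdots\iota_n\) is a strong homotopy equivalence, with quasi-inverse \(r_n\cdots r_1\). Hence \(\CC_n\simeq_s\CC\).

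No step is a serious obstacle. The one point needing care is confirming that the hypotheses of Theorem~\ref{thm:vector_field_stabilization} hold at each stage, namely that full subcategories of finite acyclic categories remain finite and acyclic. The other is checking the whiskering compatibility of \(\simeq_s\) needed for composition.
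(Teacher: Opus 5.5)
Your proposal is correct and takes the same route as the paper: you stabilize each $V_k$ by Theorem~\ref{thm:vector_field_stabilization} (or its dual) to obtain $r_k$ with $r_k\iota_k=1_{\CC_k}$ and a single natural transformation between $\iota_kr_k$ and $1_{\CC_{k-1}}$, and then compose. You also make explicit two points the paper leaves implicit: that each $\CC_{k-1}$ is again finite and acyclic, and that strong homotopy equivalences compose because zigzags of natural transformations can be whiskered.
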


\begin{proof}
Fix \(1\leq k\leq n\). If \(V_k\) is a right vector field, then
Theorem~\ref{thm:vector_field_stabilization} yields a retraction
\(
r_k\colon\CC_{k-1}\longrightarrow\CC_k
\)
such that
\(
r_k\iota_k=1_{\CC_k}
\)
and a natural transformation
\(
\iota_kr_k\Rightarrow1_{\CC_{k-1}}.
\)
Thus, \(\iota_k\) and \(r_k\) are mutually inverse up to strong
homotopy.

If \(V_k\) is a left vector field, the dual stabilization theorem gives
the same identities together with a natural transformation
\[
1_{\CC_{k-1}}\Rightarrow\iota_kr_k,
\]
which again makes \(\iota_k\) a strong homotopy equivalence.

Since strong homotopy equivalences are closed under composition, the
composite inclusion
\[
\CC_n\hookrightarrow\CC_{n-1}
\hookrightarrow\cdots\hookrightarrow\CC_0=\CC
\]
is a strong homotopy equivalence.
\end{proof}

\section{Sectional category under vector-field reductions}
\label{sec:sectional_category_reductions}

We briefly recall the directed sectional category introduced in
\cite{DirectedHomotopyDatabases}. A \emph{cover} of a small category
\(\CC\) is a family of subcategories
\[
\{\Uc_0,\ldots,\Uc_n\}
\]
such that every object and every morphism of \(\CC\) belongs to at least
one \(\Uc_i\).

Let \(P\colon\E\to\CC\) be a functor, and let
\(\iota_i\colon\Uc_i\hookrightarrow\CC\) denote the inclusion. A
\emph{local right homotopy section} of \(P\) over \(\Uc_i\) consists of a
functor
\(
s_i\colon\Uc_i\longrightarrow\E
\)
and a natural transformation
\(
\alpha_i\colon Ps_i\Rightarrow\iota_i.
\)
We recall the definition of directed sectional category given by the author in \cite[Definition 6.2]{DirectedHomotopyDatabases}.
\begin{definition}
The \emph{directed sectional category} of \(P\), denoted by
\( \allowbreak\mathrm{dsecat}(P)\), is the least integer \(n\geq0\) for which
\(\CC\) admits a cover
\(
\{\Uc_0,\ldots,\Uc_n\}
\)
such that \(P\) has a local right homotopy section over every \(\Uc_i\).
If no such finite cover exists, we set
\(
\mathrm{dsecat}(P)=\infty.
\)
\end{definition}

If instead of local right homotopy sections we use strict sections, i.e. \(P s_i=\iota_i\), we have the \emph{sectional category} \cite{Baues-Isaac}.

We shall also use the following change-of-base notation. Given a functor
\[
f\colon\Dc\longrightarrow\CC,
\]
let
\[
f^*\E=\Dc\times_\CC\E
\]
and denote by
\[
f^*P\colon f^*\E\longrightarrow\Dc
\]
the pullback of \(P\) along \(f\):
\[
\begin{tikzcd}
f^*\E
  \arrow[r]
  \arrow[d, "f^*P"']
&
\E
  \arrow[d, "P"]
\\
\Dc
  \arrow[r, "f"']
&
\CC .
\end{tikzcd}
\]
Thus, \(f^*P\) is the change of base of \(P\) along \(f\).

We first establish two lemmas relating directed sectional category to right directed fibrations.

\begin{lemma}\label{lem:right_fibration_strictification}
Let \(P\colon\E\to\CC\) be a right directed fibration. Every local right
homotopy section of \(P\) can be replaced by a strict local section.
Consequently,
\[
\mathrm{dsecat}(P)=\mathrm{secat}(P).
\]
\end{lemma}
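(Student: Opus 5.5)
The plan is to apply the right directed lifting property of \(P\) to the natural transformation that comes with a local right homotopy section. Fix a cover element \(\Uc_i\) with inclusion \(\iota_i\colon\Uc_i\hookrightarrow\CC\), and a local right homotopy section \((s_i,\alpha_i)\) with \(\alpha_i\colon Ps_i\Rightarrow\iota_i\). We read this as a natural transformation between the functors \(F=Ps_i\) and \(G=\iota_i\colon\Uc_i\to\CC\). The functor \(s_i\) is, by construction, a lift of \(F\) through \(P\).

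By Definition~\ref{def:right_directed_fibration_recall}, applied with the domain category \(\Uc_i\), the lift \(\widetilde F=s_i\) extends to two pieces of data:
\begin{itemize}
    \item a lift \(\widetilde G\colon\Uc_i\to\E\) of \(G\), that is, \(P\widetilde G=\iota_i\);
    \item a natural transformation \(\widetilde\alpha\colon s_i\Rightarrow\widetilde G\) with \(P\widetilde\alpha=\alpha_i\).
\end{itemize}
Then \(s_i'=\widetilde G\) is a strict local section over \(\Uc_i\). This proves the first assertion; the transformation \(\widetilde\alpha\) is not needed afterwards.

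For the equality of invariants we compare the two numbers in both directions.
\begin{itemize}
    \item \(\mathrm{dsecat}(P)\le\mathrm{secat}(P)\) holds for every functor. A strict section \(s_i\) with \(Ps_i=\iota_i\) is a local right homotopy section, taking \(\alpha_i\) to be the identity transformation. So any cover witnessing \(\mathrm{secat}\) also witnesses \(\mathrm{dsecat}\).
    \item \(\mathrm{secat}(P)\le\mathrm{dsecat}(P)\) follows from the replacement above. Given a cover \(\{\Uc_0,\dots,\Uc_n\}\) with local right homotopy sections, we replace each one by a strict section on the same \(\Uc_i\). This yields a cover of the same size with strict sections.
\end{itemize}
If either invariant is infinite, the same comparison shows that the other is infinite as well.

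There is no real obstacle. The only point to check is that the lifting property in Definition~\ref{def:right_directed_fibration_recall} is quantified over arbitrary small domain categories \(\CC\), so it may be applied with domain \(\Uc_i\). We also need the orientation to match: the transformation must go from the lifted functor \(Ps_i\) to the functor we want to lift, \(\iota_i\). This is exactly the right-directed (forward) case, which is why the statement requires a right rather than a left directed fibration.
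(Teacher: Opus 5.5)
Your proof is correct and follows the paper's argument essentially verbatim: you lift \(\alpha_i\colon Ps_i\Rightarrow\iota_i\) starting from \(s_i\) via the right directed lifting property to obtain a strict section \(\widetilde G\) with \(P\widetilde G=\iota_i\), and you get the reverse inequality by viewing strict sections as right homotopy sections with the identity transformation. Your explicit handling of the infinite case and of the orientation of \(\alpha_i\) are minor additions that the paper leaves implicit.
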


\begin{proof}
Let \(s\colon\Uc\to\E\) and
\[
\alpha\colon Ps\Rightarrow\iota
\]
be a local right homotopy section, where
\(\iota\colon\Uc\hookrightarrow\CC\) is the inclusion. Since \(P\) is a
right directed fibration, \(\alpha\) lifts from \(s\). Hence there are a
functor \(\widetilde s\colon\Uc\to\E\) and a natural transformation
\(
s\Rightarrow\widetilde s
\)
such that
\(
P\widetilde s=\iota.
\)
Thus, \(\widetilde s\) is a strict local section. The reverse inequality
\(\mathrm{dsecat}(P)\leq\mathrm{secat}(P)\) follows because every strict
section is a right homotopy section via the identity transformation.
\end{proof}
\begin{lemma}\label{lem:dsecat_pullback_monotonicity}
Let \(P\colon\E\to\CC\) be a right directed fibration and let
\(f\colon\Dc\to\CC\) be a functor. Then
\[
\mathrm{dsecat}(f^*P)
\leq
\mathrm{dsecat}(P).
\]
\end{lemma}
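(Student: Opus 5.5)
The plan is to pull back a minimal cover of \(\CC\) along \(f\), and to reduce everything to strict sections via Lemma~\ref{lem:right_fibration_strictification}. First I will show that right directed fibrations are stable under pullback, so that \(f^*P\) is again a right directed fibration. Then both sides become ordinary sectional categories:
\[
\mathrm{dsecat}(f^*P)=\mathrm{secat}(f^*P),
\qquad
\mathrm{dsecat}(P)=\mathrm{secat}(P).
\]
It therefore suffices to prove \(\mathrm{secat}(f^*P)\leq\mathrm{secat}(P)\).

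For the stability step, take a natural transformation \(\alpha\colon F\Rightarrow G\) between functors \(\CC'\to\Dc\), and a lift \(\widetilde F=(F,E)\colon\CC'\to\Dc\times_\CC\E\). The lift \(E\colon\CC'\to\E\) is a lift of \(fF\) along \(P\). Apply the right lifting property of \(P\) to \(f\alpha\colon fF\Rightarrow fG\), starting from \(E\). This gives \(E'\) and \(\widetilde\beta\colon E\Rightarrow E'\) with \(PE'=fG\) and \(P\widetilde\beta=f\alpha\). The universal property of the pullback of categories, which also applies to functors out of \(\CC'\times\mathbb I_1\), then produces the lift \((G,E')\) and the transformation \((\alpha,\widetilde\beta)\). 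This transformation lies over \(\alpha\).

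For the comparison of sectional categories, suppose \(\{\Uc_0,\ldots,\Uc_n\}\) is a cover of \(\CC\) with strict sections \(s_i\colon\Uc_i\to\E\), so \(Ps_i=\iota_i\). For each \(i\), let \(\mathcal V_i\subseteq\Dc\) be the subcategory of objects and morphisms of \(\Dc\) that \(f\) sends into \(\Uc_i\). This is a subcategory because \(\Uc_i\) is closed under identities and composition and \(f\) is a functor. The family \(\{\mathcal V_i\}\) covers \(\Dc\): every object or morphism \(x\) of \(\Dc\) has \(f(x)\) in some \(\Uc_i\).

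On \(\mathcal V_i\), define the section
\[
t_i=(\mathrm{incl},\,s_i\circ f|_{\mathcal V_i})\colon\mathcal V_i\longrightarrow\Dc\times_\CC\E.
\]
This is well defined because \(P s_i f|_{\mathcal V_i}=f|_{\mathcal V_i}\), and it satisfies \((f^*P)t_i=\mathrm{incl}\). So the cover \(\{\mathcal V_0,\ldots,\mathcal V_n\}\) has \(n+1\) members, each carrying a strict local section of \(f^*P\), which gives \(\mathrm{secat}(f^*P)\leq n\). The case \(\mathrm{dsecat}(P)=\infty\) is trivial.

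The main point needing care is the stability of right directed fibrations under pullback, and in particular checking that the lifted transformation is natural in the fibre product. Naturality holds because a transformation into a pullback of categories is exactly a compatible pair of transformations. One could bypass the strictification entirely by pulling back local right homotopy sections directly, but the target of \(\alpha_i\) must be the inclusion of \(\mathcal V_i\), not \(f\). That mismatch is exactly what strictification, or equivalently the lifting step, resolves.
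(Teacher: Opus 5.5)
Your proof is correct and follows the paper's argument: strictify the local sections of \(P\) via Lemma~\ref{lem:right_fibration_strictification}, pull the cover back to \(\{f^{-1}(\Uc_i)\}\), and pull back the strict sections along \(f\). The only difference is your preliminary step showing that \(f^*P\) is again a right directed fibration. That step is correct but unnecessary: a strict section of \(f^*P\) is already a local right homotopy section (with the identity transformation), which gives \(\mathrm{dsecat}(f^*P)\leq n\) directly.
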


\begin{proof}
Let \(\{\Uc_0,\ldots,\Uc_n\}\) be a cover of \(\CC\) admitting local
right homotopy sections of \(P\). By
Lemma~\ref{lem:right_fibration_strictification}, these sections may be
chosen strict.

The inverse-image subcategories
\[
f^{-1}(\Uc_0),\ldots,f^{-1}(\Uc_n)
\]
form a cover of \(\Dc\), since covers are preserved by inverse image.
Each strict local section of \(P\) over \(\Uc_i\) pulls back to a strict
local section of \(f^*P\) over \(f^{-1}(\Uc_i)\). Therefore,
\[
\mathrm{dsecat}(f^*P)
\leq
n,
\]
and the result follows.
\end{proof}
We now show that a right directed deformation of the base preserves the
directed sectional category of right directed fibrations. The result will
then be applied to the retract obtained by stabilizing a right vector
field.

\begin{theorem}\label{thm:dsecat_directed_fibration_retract}
Let
\[
\iota\colon\Uc\longrightarrow\CC,
\qquad
r\colon\CC\longrightarrow\Uc
\]
be functors, and suppose that there is a natural transformation
\[
\alpha\colon\iota r\Rightarrow1_\CC.
\]
If \(P\colon\E\to\CC\) is a right directed fibration, then
\[
\mathrm{dsecat}(\iota^*P)
=
\mathrm{dsecat}(P).
\]
\end{theorem}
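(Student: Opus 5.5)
The inequality \(\mathrm{dsecat}(\iota^*P)\leq\mathrm{dsecat}(P)\) is immediate from Lemma~\ref{lem:dsecat_pullback_monotonicity} applied with \(f=\iota\), so the plan is to prove the reverse inequality \(\mathrm{dsecat}(P)\leq\mathrm{dsecat}(\iota^*P)\). I will transport a cover of \(\Uc\) with local right homotopy sections of \(\iota^*P\) back to \(\CC\) along \(r\), and use the transformation \(\alpha\) to correct the resulting sections.

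Let \(\{\mathcal V_0,\ldots,\mathcal V_n\}\) be a cover of \(\Uc\) with local right homotopy sections \((s_i,\beta_i)\) of \(\iota^*P\), where \(\beta_i\colon(\iota^*P)s_i\Rightarrow j_i\) and \(j_i\colon\mathcal V_i\hookrightarrow\Uc\) is the inclusion. Put \(\Uc_i=r^{-1}(\mathcal V_i)\subseteq\CC\). These form a cover of \(\CC\) for the same reason as in Lemma~\ref{lem:dsecat_pullback_monotonicity}: every object and morphism of \(\CC\) maps under \(r\) into some \(\mathcal V_i\). Let \(q\colon\iota^*\E\to\E\) be the projection of the pullback, so that \(Pq=\iota\,(\iota^*P)\). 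Writing \(r_i\colon\Uc_i\to\mathcal V_i\) for the restriction of \(r\) and \(k_i\colon\Uc_i\hookrightarrow\CC\) for the inclusion, I define
\[
\sigma_i=q\,s_i\,r_i\colon\Uc_i\longrightarrow\E.
\]
Then \(P\sigma_i=\iota(\iota^*P)s_ir_i\), and I take the composite transformation
\[
P\sigma_i=\iota(\iota^*P)s_ir_i\xRightarrow{\ \iota\beta_ir_i\ }\iota j_ir_i=\iota r\,k_i\xRightarrow{\ \alpha k_i\ }k_i.
\]
This makes \((\sigma_i,(\alpha k_i)\circ(\iota\beta_ir_i))\) a local right homotopy section of \(P\) over \(\Uc_i\), and hence \(\mathrm{dsecat}(P)\leq n\). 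Both whiskerings are legitimate because \(\iota j_ir_i\) is exactly \(\iota r\) restricted to \(\Uc_i\).

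The main point to check is the bookkeeping. First, \(\iota r\) restricted to \(\Uc_i\) must literally equal \(\iota j_ir_i\); this holds by the definition of \(\Uc_i\) as an inverse image. Second, the orientations must be compatible: \(\beta_i\) and \(\alpha\) both point toward the identity, so they compose in the right direction. This is the reason the theorem requires \(\alpha\colon\iota r\Rightarrow1_\CC\) rather than the opposite orientation. Notably, the argument does not need \(r\iota=1\), and the fibration hypothesis on \(P\) is used only in the inequality \(\leq\) via Lemma~\ref{lem:dsecat_pullback_monotonicity}. The infinite case is trivial: if \(\mathrm{dsecat}(\iota^*P)=\infty\), the inequality \(\mathrm{dsecat}(P)\leq\mathrm{dsecat}(\iota^*P)\) holds automatically.
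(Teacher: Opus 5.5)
Your proof is correct. The inequality \(\mathrm{dsecat}(\iota^*P)\leq\mathrm{dsecat}(P)\) is obtained exactly as in the paper, but your proof of the reverse inequality follows a different and more direct route.

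The paper passes through the composite pullback \(Q=r^*(\iota^*P)\cong(\iota r)^*P\) in four steps. It lifts the whiskered transformation \(\alpha Q\colon PJ=\iota rQ\Rightarrow Q\) from the canonical functor \(J\), using the right lifting property of \(P\). This gives \(A_\alpha\) with \(PA_\alpha=Q\). It then invokes a cited comparison result to get \(\mathrm{dsecat}(P)\leq\mathrm{dsecat}(Q)\). Finally it applies Lemma~\ref{lem:dsecat_pullback_monotonicity} along \(r\). That last step tacitly needs \(\iota^*P\) to be a right directed fibration again, which holds by pullback stability of the lifting property but is not stated.

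You instead push each local homotopy section of \(\iota^*P\) directly into \(\E\) as \(\sigma_i=qs_ir_i\). You absorb the discrepancy with the composite \((\alpha k_i)\circ(\iota\beta_ir_i)\), so no lifting, strictification or pullback stability is needed. As you note, this proves \(\mathrm{dsecat}(P)\leq\mathrm{dsecat}(\iota^*P)\) for an arbitrary functor \(P\). It also shows that the lifting step in the paper is dispensable for this inequality: composing with \(J\) and correcting by \(\alpha\) would already suffice.

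What the paper's detour buys is a strict comparison functor \(A_\alpha\) over \(\CC\) with \(PA_\alpha=Q\). It sends strict sections to strict sections, so it is also suited to ordinary sectional category. In your approach, that refinement for right directed fibrations comes afterwards from Lemma~\ref{lem:right_fibration_strictification}.
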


\begin{proof}
Since \(\iota^*P\) is obtained from \(P\) by change of base along
\(\iota\), using Lemma~\ref{lem:dsecat_pullback_monotonicity} we obtain
\[
\mathrm{dsecat}(\iota^*P)
\leq
\mathrm{dsecat}(P).
\]

Consider now the two successive pullbacks
\[
\begin{tikzcd}[column sep=large]
r^*(\iota^*\E)
  \arrow[r]
  \arrow[d, "Q"']
&
\iota^*\E
  \arrow[r]
  \arrow[d, "\iota^*P"']
&
\E
  \arrow[d, "P"]
\\
\CC
  \arrow[r, "r"']
&
\Uc
  \arrow[r, "\iota"']
&
\CC .
\end{tikzcd}
\]
Both squares are pullbacks, so the outer rectangle is also a pullback.
Consequently, there is a canonical isomorphism
\[
Q
=
r^*(\iota^*P)
\cong
(\iota r)^*P.
\]

Let
\[
J\colon r^*(\iota^*\E)\longrightarrow\E
\]
be the canonical functor determined by the outer pullback. Then
\[
PJ=\iota rQ.
\]
Whiskering \(\alpha\colon\iota r\Rightarrow1_\CC\) with \(Q\) gives a
natural transformation
\[
\alpha Q\colon
PJ=\iota rQ
\Rightarrow
Q.
\]

Since \(P\) is a right directed fibration and \(J\) is a lift of
\(\iota rQ\), this transformation admits a lift starting at \(J\).
Hence there are a functor
\[
A_\alpha\colon r^*(\iota^*\E)\longrightarrow\E
\]
and a natural transformation
\(
J\Rightarrow A_\alpha
\)
such that
\(
PA_\alpha=Q.
\)
Therefore, by \cite[Proposition~6.3]{DirectedHomotopyDatabases},
\[
\mathrm{dsecat}(P)
\leq
\mathrm{dsecat}(PA_\alpha)
=
\mathrm{dsecat}(Q).
\]

Finally, since
\(
Q=r^*(\iota^*P),
\)
Lemma~\ref{lem:dsecat_pullback_monotonicity}, applied along \(r\), gives
\[
\mathrm{dsecat}(Q)
\leq
\mathrm{dsecat}(\iota^*P).
\]
Combining the three inequalities, we obtain
\[
\mathrm{dsecat}(P)
\leq
\mathrm{dsecat}(Q)
\leq
\mathrm{dsecat}(\iota^*P)
\leq
\mathrm{dsecat}(P),
\]
and hence all three quantities are equal.
\end{proof}

\begin{remark}
    Notice that the theorem does not require \(r\iota=1_\Uc\); only the
directed deformation \(\iota r\Rightarrow1_\CC\) is used.
\end{remark}
\begin{corollary}\label{cor:dsecat_vector_field_reduction}
Let \(\CC\) be a finite acyclic category and let
\((R,\varepsilon)\) be a right vector field on \(\CC\). If
\[
\iota_R\colon\operatorname{Fix}(R)\hookrightarrow\CC
\]
is the inclusion, then every right directed fibration
\(P\colon\E\to\CC\) satisfies
\[
\mathrm{dsecat}(P)
=
\mathrm{dsecat}(\iota_R^*P).
\]
\end{corollary}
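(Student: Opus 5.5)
The plan is to combine Theorem~\ref{thm:vector_field_stabilization} with Theorem~\ref{thm:dsecat_directed_fibration_retract}; no new construction should be needed. First apply Theorem~\ref{thm:vector_field_stabilization} to the finite acyclic category \(\CC\) and the right vector field \((R,\varepsilon)\). This gives \(N\geq1\) such that \(R^N\) is idempotent and takes values in \(\operatorname{Fix}(R)\). Its proof also identifies \(\operatorname{Fix}(R^N)=\operatorname{Fix}(R)\).

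Next apply Proposition~\ref{prop:idempotent_fields_retracts} to the idempotent right vector field \((R^N,\varepsilon^{(N)})\). Corestricting \(R^N\) to its fixed subcategory yields a functor
\[
r\colon\CC\longrightarrow\operatorname{Fix}(R)
\]
with \(\iota_R r=R^N\) and \(r\iota_R=1_{\operatorname{Fix}(R)}\). Moreover, \(\varepsilon^{(N)}\) becomes a natural transformation
\[
\alpha=\varepsilon^{(N)}\colon\iota_R r\Rightarrow1_\CC .
\]

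With these data, the hypotheses of Theorem~\ref{thm:dsecat_directed_fibration_retract} hold for \(\Uc=\operatorname{Fix}(R)\), \(\iota=\iota_R\), this \(r\) and this \(\alpha\). Since \(P\) is a right directed fibration, that theorem gives \(\mathrm{dsecat}(\iota_R^*P)=\mathrm{dsecat}(P)\) directly. By the remark after the theorem, the identity \(r\iota_R=1\) is not even needed; only the transformation \(\iota_R r\Rightarrow1_\CC\) is used.

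The only point needing care is that the inclusion in the statement is \(\iota_R\colon\operatorname{Fix}(R)\hookrightarrow\CC\), not the inclusion of \(\operatorname{Fix}(R^N)\). This is settled by the equality \(\operatorname{Fix}(R^N)=\operatorname{Fix}(R)\) from the stabilization theorem, so the two inclusions coincide and the argument involves no real obstacle.
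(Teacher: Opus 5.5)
Your proposal is correct and matches the paper's proof: stabilize \(R\) via Theorem~\ref{thm:vector_field_stabilization} to obtain \(r\colon\CC\to\operatorname{Fix}(R)\) with \(\varepsilon^{(N)}\colon\iota_R r\Rightarrow1_\CC\), then apply Theorem~\ref{thm:dsecat_directed_fibration_retract}. Your extra check that \(\operatorname{Fix}(R^N)=\operatorname{Fix}(R)\) is already contained in the statement of the stabilization theorem, which asserts that the retract is onto \(\operatorname{Fix}(R)\), so it is harmless but redundant.
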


\begin{proof}
By Theorem~\ref{thm:vector_field_stabilization}, a sufficiently large
iterate of \(R\) corestricts to a functor
\[
r_R\colon\CC\longrightarrow\operatorname{Fix}(R),
\]
and the iterated transformation yields
\[
\iota_Rr_R\Rightarrow1_\CC.
\]
The result follows from
Theorem~\ref{thm:dsecat_directed_fibration_retract}.
\end{proof}

Thus, the directed sectional category of a right directed fibration may be
computed after restricting its base to the fixed subcategory of a right
vector field.

\section{Vector fields and databases}
\label{sec:databases_retractions}

\subsection{Functorial databases and global sections}
\label{subsec:functorial_databases_sections}

In the functorial model of databases, a schema is a small category
\(\CC\), and a database instance on it is a functor
\(
X\colon\CC\longrightarrow\Sets;
\)
see \cite{SpivakDataMigration,SpivakDatabaseQueries,SevenSketches}.
The elements of \(X(c)\) are the records of type \(c\), while the
morphisms of \(\CC\) encode the functional relationships among data
types.

Equivalently, \(X\) determines a discrete opfibration
\[
\pi_X\colon\int^\CC X\longrightarrow\CC.
\]
A section of \(\pi_X\) is a globally coherent choice of one record of
each type.

\begin{definition}\label{def:global_sections_database}
The set of \emph{global sections} of a database
\(X\colon\CC\to\Sets\) is
\[
\Gamma(\CC,X)
=
\operatorname{Sect}(\pi_X).
\]
\end{definition}

There is a natural identification
\[
\Gamma(\CC,X)\cong\lim_\CC X.
\]
Indeed, a section is a family \((x_c)_{c\in\CC}\), with
\(x_c\in X(c)\), such that
\(
X(f)(x_c)=x_d
\)
for every morphism \(f\colon c\to d\).

We now show that right directed deformation retracts preserve all limits,
and hence all global sections.

\begin{proposition}\label{prop:directed_retract_initial}
Let
\[
\Uc
\mathrel{\substack{\xrightarrow{\iota}\\[-1mm]\xleftarrow[r]{}}}
\CC
\]
be a right directed deformation retract. Then the inclusion
\(
\iota\colon\Uc\hookrightarrow\CC
\)
is initial.
\end{proposition}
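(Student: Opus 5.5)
The plan is to verify the standard criterion for initiality directly: \(\iota\colon\Uc\hookrightarrow\CC\) is initial if and only if, for every object \(c\in\CC\), the comma category \(\iota\downarrow c\) is non-empty and connected. Its objects are pairs \((u,g)\) with \(u\in\Uc\) and \(g\colon u\to c\) in \(\CC\). A morphism \((u,g)\to(u',g')\) is a morphism \(h\colon u\to u'\) in \(\Uc\) with \(g'h=g\).

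For non-emptiness, the pair \((r(c),\alpha_c)\) is an object of \(\iota\downarrow c\), since \(\alpha_c\colon\iota r(c)\to c\). So the whole argument reduces to showing that every object \((u,g)\) is joined to this distinguished object by a zigzag; in fact a single morphism will suffice.

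Given \((u,g)\), the candidate morphism \((u,g)\to(r(c),\alpha_c)\) is \(r(g)\colon u=r(u)\to r(c)\), which is a morphism of \(\Uc\) because \(r\iota=1_\Uc\). We must check that \(\alpha_c\circ r(g)=g\). Naturality of \(\alpha\) at \(g\colon u\to c\) gives
\[
\alpha_c\circ \iota r(g)=g\circ\alpha_u ,
\]
and \(\alpha_u=1_u\) because the restriction of \(\alpha\) to \(\Uc\) is the identity. Hence \(\alpha_c\,r(g)=g\). Thus \((r(c),\alpha_c)\) is weakly terminal in \(\iota\downarrow c\): every object maps to it. It follows that \(\iota\downarrow c\) is connected.

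The only delicate point is using both hypotheses together: \(r\iota=1_\Uc\) identifies the domain of \(r(g)\) with \(u\), and \(\alpha|_\Uc=1\) kills the right-hand side of the naturality square. Everything else is routine. If the paper's convention for ``initial'' is instead phrased as preservation of limits, I will add a remark that connectedness of all comma categories \(\iota\downarrow c\) is the standard equivalent characterization (Mac Lane, Chapter~IX, Section~3, dualized). Then \(\lim_\Uc X\iota\cong\lim_\CC X\), and in particular \(\Gamma(\Uc,X\iota)\cong\Gamma(\CC,X)\).
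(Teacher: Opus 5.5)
Your proposal is correct and is essentially the paper's own proof. You exhibit \((r(c),\alpha_c)\) as an object of \(\iota\downarrow c\), and you use naturality of \(\alpha\) at \(g\), together with \(r\iota=1\) and \(\alpha_u=1_u\), to show that \(r(g)\) is a morphism from every \((u,g)\) to it, so every \(\iota\downarrow c\) is non-empty and connected.
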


\begin{proof}
Let
\(
\alpha\colon\iota r\Rightarrow1_\CC
\)
be the associated natural transformation. Thus,
\(
r\iota=1_\Uc
\text{ and }
\alpha\iota=1_\iota.
\)

For every \(c\in\CC\), the pair
\[
\bigl(r(c),\alpha_c\colon\iota r(c)\to c\bigr)
\]
is an object of the comma category \(\iota\downarrow c\). Hence
\(\iota\downarrow c\) is non-empty.

Let
\[
(u,f\colon\iota(u)\to c)
\]
be any other object of \(\iota\downarrow c\). Naturality of \(\alpha\)
with respect to \(f\) gives the commutative square
\[
\begin{tikzcd}[column sep=large]
\iota r\iota(u)
  \arrow[r, "\iota r(f)"]
  \arrow[d, "\alpha_{\iota(u)}"']
&
\iota r(c)
  \arrow[d, "\alpha_c"]
\\
\iota(u)
  \arrow[r, "f"']
&
c.
\end{tikzcd}
\]
Since \(r\iota=1_\Uc\) and
\(\alpha_{\iota(u)}=1_{\iota(u)}\), this square yields
\[
f=\alpha_c\,\iota(r(f)).
\]
Therefore,
\[
r(f)\colon u\longrightarrow r(c)
\]
defines a morphism
\[
r(f)\colon
(u,f)
\longrightarrow
(r(c),\alpha_c)
\]
in \(\iota\downarrow c\).

Thus, every object of \(\iota\downarrow c\) is connected to
\((r(c),\alpha_c)\). Consequently, \(\iota\downarrow c\) is non-empty
and connected for every \(c\in\CC\), and hence \(\iota\) is initial.
\end{proof}

\begin{corollary}\label{cor:directed_retract_limits}
Under the hypotheses of
Proposition~\ref{prop:directed_retract_initial}, restriction along
\(\iota\) induces a natural isomorphism
\[
\lim_\CC X
\cong
\lim_\Uc X\iota
\]
for every functor \(X\colon\CC\to\E\) for which these limits exist.
\end{corollary}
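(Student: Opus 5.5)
The plan is to deduce this corollary from Proposition~\ref{prop:directed_retract_initial} and the standard fact that restriction along an initial functor preserves limits, the dual of the classical statement for final (cofinal) functors \cite[Chapter~IX, Section~3]{MacLaneCategories}. By Proposition~\ref{prop:directed_retract_initial}, the inclusion \(\iota\colon\Uc\hookrightarrow\CC\) is initial: for each \(c\in\CC\) the comma category \(\iota\downarrow c\) is non-empty and connected. It therefore suffices to show that, for any \(X\colon\CC\to\E\), cones over \(X\) correspond bijectively and naturally to cones over \(X\iota\). The two limits then represent the same functor, so they are isomorphic, and either one exists whenever the other does.

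I will check this correspondence directly rather than only cite it. Restriction sends a cone \((\lambda_c\colon L\to X(c))_{c\in\CC}\) to \((\lambda_{\iota(u)})_{u\in\Uc}\). Conversely, given a cone \((\mu_u\colon L\to X\iota(u))_{u\in\Uc}\), I define
\[
\lambda_c=X(\alpha_c)\,\mu_{r(c)},
\]
using the canonical object \((r(c),\alpha_c)\) of \(\iota\downarrow c\). Alternatively, \(\lambda_c\) could be defined as \(X(f)\mu_u\) for any \((u,f)\in\iota\downarrow c\); this is well defined because \(\iota\downarrow c\) is connected and \(\mu\) is a cone, since a morphism \((u,f)\to(u',f')\) gives \(X(f')\mu_{u'}=X(f')X(\iota g)\mu_u=X(f)\mu_u\).

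Next I will verify the cone condition for \(\lambda\). For \(h\colon c\to d\), the pair \((r(c),h\alpha_c)\) is an object of \(\iota\downarrow d\). Connectedness then gives \(X(h)\lambda_c=X(h\alpha_c)\mu_{r(c)}=\lambda_d\). The two constructions are mutually inverse. Restricting \(\lambda\) gives \(X(\alpha_{\iota u})\mu_{r\iota u}=\mu_u\), because \(r\iota=1_\Uc\) and \(\alpha\iota=1_\iota\). In the other direction, restricting and then extending returns \(\lambda_c\) by the cone condition applied to \(\alpha_c\). Both constructions are natural in \(L\) and in \(X\), which yields the natural isomorphism \(\lim_\CC X\cong\lim_\Uc X\iota\).

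The only delicate step is the well-definedness via connectedness of \(\iota\downarrow c\), and Proposition~\ref{prop:directed_retract_initial} has already supplied exactly that. Everything else is bookkeeping.
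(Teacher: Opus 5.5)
Your proposal is correct and follows the paper's route. The paper's proof combines the initiality of \(\iota\) from Proposition~\ref{prop:directed_retract_initial} with the dual cofinality theorem of \cite[Chapter~IX, Section~3]{MacLaneCategories}, and your explicit cone bijection \(\lambda_c=X(\alpha_c)\mu_{r(c)}\) is that theorem unwound in this special case; your cone check could even bypass connectedness, since naturality gives \(h\alpha_c=\alpha_d\,\iota r(h)\) and hence \(X(h)\lambda_c=X(\alpha_d)X(\iota r(h))\mu_{r(c)}=\lambda_d\) directly.
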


\begin{proof}
This is the cofinality theorem for initial functors \cite[Chapter~IX, Section~3]{MacLaneCategories}.
\end{proof}

\begin{corollary}\label{cor:directed_retract_database_sections}
Under the same hypotheses, restriction induces a natural bijection
\[
\Gamma(\CC,X)
\cong
\Gamma(\Uc,X|_\Uc)
\]
for every database \(X\colon\CC\to\Sets\).
\end{corollary}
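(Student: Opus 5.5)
The bijection follows by specializing Corollary~\ref{cor:directed_retract_limits} to the target category \(\Sets\), combined with the identification of global sections with limits. The plan is as follows.

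First, by Definition~\ref{def:global_sections_database} and the identification stated after it, \(\Gamma(\CC,X)\cong\lim_\CC X\), realized concretely as the set of compatible families \((x_c)_{c\in\CC}\). The same identification applied to the database \(X|_\Uc=X\iota\colon\Uc\to\Sets\) gives \(\Gamma(\Uc,X|_\Uc)\cong\lim_\Uc X\iota\). Since \(\Uc\) and \(\CC\) are small and \(\Sets\) is complete, both limits exist. Corollary~\ref{cor:directed_retract_limits}, with \(\E=\Sets\), therefore gives \(\lim_\CC X\cong\lim_\Uc X\iota\), via the canonical comparison map induced by restriction. Composing the three isomorphisms yields the claimed bijection.

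The one point to check is that the composite really is restriction of sections, namely \((x_c)_{c\in\CC}\mapsto(x_u)_{u\in\Uc}\). This holds because the comparison map \(\lim_\CC X\to\lim_\Uc X\iota\) in the cofinality theorem is exactly the map forgetting the components outside \(\Uc\). Naturality in \(X\) comes for free, since every step is natural in \(X\).

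For concreteness, the inverse can also be written down directly, which makes the statement self-contained. Send a section \((y_u)_{u\in\Uc}\) to the family \(x_c=X(\alpha_c)(y_{r(c)})\).

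\begin{itemize}
\item \textbf{Compatibility.} For \(f\colon c\to d\), naturality of \(\alpha\) gives
\[
X(f)(x_c)=X(\alpha_d)X(\iota r f)(y_{r(c)})=X(\alpha_d)(y_{r(d)})=x_d,
\]
where the middle equality uses that \((y_u)\) is a section.
\item \textbf{Restriction gives back \((y_u)\).} On \(\Uc\), \(\alpha\iota=1\) and \(r\iota=1\), so \(x_u=y_u\).
\item \textbf{Extension gives back \((x_c)\).} For any section \((x_c)\) of \(X\), compatibility along \(\alpha_c\) gives \(x_c=X(\alpha_c)(x_{r(c)})\).
\end{itemize}

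No step is a real obstacle. The only care needed is in matching the canonical map of the cofinality theorem with restriction of sections.
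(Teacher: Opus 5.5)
Your proposal is correct and matches the paper's proof: specialize Corollary~\ref{cor:directed_retract_limits} to \(\Sets\) and combine it with the identification \(\Gamma(\CC,X)\cong\lim_\CC X\). Your explicit inverse \(x_c=X(\alpha_c)(y_{r(c)})\) also checks out, and it gives a self-contained verification that bypasses both the cofinality theorem and Proposition~\ref{prop:directed_retract_initial}.
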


\begin{proof}
This follows from
Corollary~\ref{cor:directed_retract_limits} and the identification of
global sections with limits.
\end{proof}

\begin{corollary}\label{cor:vector_field_database_sections}
Let \(\CC\) be a finite acyclic category and let
\((R,\varepsilon)\) be a right vector field on \(\CC\). Then restriction
induces a natural bijection
\[
\Gamma(\CC,X)
\cong
\Gamma\bigl(
\operatorname{Fix}(R),
X|_{\operatorname{Fix}(R)}
\bigr)
\]
for every database \(X\colon\CC\to\Sets\).
\end{corollary}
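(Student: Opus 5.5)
The plan is to reduce the statement to Corollary~\ref{cor:directed_retract_database_sections} by producing a right directed deformation retract onto \(\operatorname{Fix}(R)\). Theorem~\ref{thm:vector_field_stabilization} supplies this. Since \(\CC\) is finite and acyclic, there is \(N\geq1\) such that \(R^N\) is idempotent and takes values in \(\operatorname{Fix}(R)\). Corestricting gives \(r_R\colon\CC\to\operatorname{Fix}(R)\) with \(\iota_Rr_R=R^N\). Proposition~\ref{prop:fixed_subcategory} shows that \(R\), and hence \(R^N\), restricts to the identity on \(\operatorname{Fix}(R)\), so \(r_R\iota_R=1_{\operatorname{Fix}(R)}\).

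For the deformation we take the iterated transformation
\[
\varepsilon^{(N)}\colon\iota_Rr_R=R^N\Rightarrow1_\CC.
\]
We must check that its restriction to \(\operatorname{Fix}(R)\) is the identity. At a fixed object \(c\), the component is the composite
\[
\varepsilon_{R^{N-1}(c)}\cdots\varepsilon_{R(c)}\,\varepsilon_c.
\]
Every \(R^k(c)=c\) and every \(\varepsilon_c=1_c\), so this composite is \(1_c\). Therefore \(\operatorname{Fix}(R)\) is a right directed deformation retract of \(\CC\), as in Proposition~\ref{prop:idempotent_fields_retracts}.

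Proposition~\ref{prop:directed_retract_initial} then shows that \(\iota_R\) is initial. By Corollary~\ref{cor:directed_retract_limits} and the identification \(\Gamma\cong\lim\), Corollary~\ref{cor:directed_retract_database_sections} gives the natural bijection
\[
\Gamma(\CC,X)\cong\Gamma\bigl(\operatorname{Fix}(R),X|_{\operatorname{Fix}(R)}\bigr).
\]
This bijection is induced by restriction.

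The only point needing care is the condition that \(\varepsilon^{(N)}\) restricts to the identity on the fixed subcategory. The initiality argument in Proposition~\ref{prop:directed_retract_initial} uses \(\alpha\iota=1_\iota\) explicitly, so this condition cannot be skipped. As shown above, it follows directly from the definition of fixed objects.
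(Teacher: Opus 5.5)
Your proposal is correct and follows the paper's proof: Theorem~\ref{thm:vector_field_stabilization} yields a right directed deformation retract onto \(\operatorname{Fix}(R)\), and Corollary~\ref{cor:directed_retract_database_sections} then gives the bijection. Your explicit check that \(\varepsilon^{(N)}\) is the identity on fixed objects only unpacks what the theorem already provides; in standard composition order the component is \(\varepsilon_c\varepsilon_{R(c)}\cdots\varepsilon_{R^{N-1}(c)}\), which makes no difference at fixed objects.
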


\begin{proof}
By Theorem~\ref{thm:vector_field_stabilization}, a sufficiently large
iterate of \(R\) determines a right directed deformation retract
\[
\operatorname{Fix}(R)
\mathrel{\substack{\xrightarrow{\iota_R}\\[-1mm]\xleftarrow[r_R]{}}}
\CC
\]
relative to \(\operatorname{Fix}(R)\). The result follows from
Corollary~\ref{cor:directed_retract_database_sections}.
\end{proof}

\subsection{Preservation of sectional category}
\label{subsec:sectional_category_reduction}

For Grothendieck opfibrations, the preceding reduction also preserves
ordinary sectional category.

\begin{corollary}\label{cor:secat_opfibration_vector_field}
Let \(\CC\) be a finite acyclic category, let
\((R,\varepsilon)\) be a right vector field, and let
\[
\iota_R\colon\operatorname{Fix}(R)\hookrightarrow\CC
\]
be the inclusion. If \(P\colon\E\to\CC\) is a Grothendieck opfibration,
then
\[
\mathrm{secat}(\pi_X)
=
\mathrm{dsecat}(\pi_X)
=
\mathrm{dsecat}
\bigl(\pi_{X|_{\operatorname{Fix}(R)}}\bigr)
=
\mathrm{secat}
\bigl(\pi_{X|_{\operatorname{Fix}(R)}}\bigr).
\]
\end{corollary}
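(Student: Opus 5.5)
The statement mixes \(P\) and \(\pi_X\). I read it as follows: \(X\colon\CC\to\Sets\) is a database instance, and \(P=\pi_X\colon\int^\CC X\to\CC\) is its Grothendieck (in fact discrete) opfibration. More generally, the chain of equalities should hold for any Grothendieck opfibration \(P\) in place of \(\pi_X\), with \(\iota_R^*P\) in place of \(\pi_{X|_{\operatorname{Fix}(R)}}\). The plan is to reduce everything to results already proved in Section~\ref{sec:sectional_category_reductions}. There are three steps: show that a Grothendieck opfibration is a right directed fibration; identify the pullback of \(\pi_X\) along \(\iota_R\) with \(\pi_{X|_{\operatorname{Fix}(R)}}\); then chain Lemma~\ref{lem:right_fibration_strictification} and Corollary~\ref{cor:dsecat_vector_field_reduction}.

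\emph{Step 1: opfibrations are right directed fibrations.} Let \(\alpha\colon F\Rightarrow G\) be a natural transformation of functors \(\CC'\to\CC\), and let \(\widetilde F\) be a lift of \(F\). For each object \(x\) of \(\CC'\), choose an opcartesian lift \(\widetilde\alpha_x\colon\widetilde F(x)\to e_x\) of \(\alpha_x\), and set \(\widetilde G(x)=e_x\). For a morphism \(g\colon x\to y\) in \(\CC'\), the composite \(\widetilde\alpha_y\widetilde F(g)\) lies over \(\alpha_yF(g)=G(g)\alpha_x\). The opcartesian property of \(\widetilde\alpha_x\) then gives a unique morphism \(\widetilde G(g)\colon e_x\to e_y\) lying over \(G(g)\) with \(\widetilde G(g)\widetilde\alpha_x=\widetilde\alpha_y\widetilde F(g)\). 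Uniqueness in the universal property yields functoriality of \(\widetilde G\), and the defining equation is exactly naturality of \(\widetilde\alpha\). For \(\pi_X\) this is even more direct: the lift is forced, \(\widetilde\alpha_x=(\alpha_x,\,X(\alpha_x)(\text{element}))\), because \(\pi_X\) is a discrete opfibration. This is the step I expect to require the most care. The opcartesian condition must be used in its strong form (factorization over arbitrary morphisms of the base, not only over fibre morphisms) to get both existence and uniqueness of \(\widetilde G(g)\). Since the definition of Grothendieck opfibration is standard, this is routine verification rather than a real obstacle.

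\emph{Step 2: identify the pullback.} The pullback \(\iota_R^*(\int^\CC X)=\operatorname{Fix}(R)\times_\CC\int^\CC X\) has objects \((c,x)\) with \(c\in\operatorname{Fix}(R)\) and \(x\in X(c)\), and morphisms the morphisms of \(\int^\CC X\) lying over \(\operatorname{Fix}(R)\). This is precisely \(\int^{\operatorname{Fix}(R)}X|_{\operatorname{Fix}(R)}\). Under this canonical isomorphism, \(\iota_R^*\pi_X=\pi_{X|_{\operatorname{Fix}(R)}}\). Pullbacks of opfibrations are opfibrations, so the restricted projection is again a right directed fibration by Step 1.

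\emph{Step 3: chain the equalities.} The first equality \(\mathrm{secat}(\pi_X)=\mathrm{dsecat}(\pi_X)\) follows from Lemma~\ref{lem:right_fibration_strictification} applied to \(\pi_X\), using Step 1. The middle equality comes from Corollary~\ref{cor:dsecat_vector_field_reduction} together with the identification in Step 2. The last equality is Lemma~\ref{lem:right_fibration_strictification} again, now applied to \(\pi_{X|_{\operatorname{Fix}(R)}}\), which is a right directed fibration. For a general Grothendieck opfibration \(P\), the same three steps go through verbatim with \(\iota_R^*P\) in place of \(\pi_{X|_{\operatorname{Fix}(R)}}\).
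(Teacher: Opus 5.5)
Your proposal is correct and follows the paper's proof, which likewise reduces the chain to Corollary~\ref{cor:dsecat_vector_field_reduction} plus strictification. The only differences are that you verify directly that Grothendieck opfibrations are right directed fibrations and take the outer equalities from Lemma~\ref{lem:right_fibration_strictification}, where the paper cites \cite[Propositions~2.6 and~6.4]{DirectedHomotopyDatabases}; you also make explicit the identification $\iota_R^*\pi_X\cong\pi_{X|_{\operatorname{Fix}(R)}}$, which the paper only records in the proof of Corollary~\ref{cor:database_vector_field_reduction}.
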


\begin{proof}
Every Grothendieck opfibration is a right directed fibration, and every
local right homotopy section of such an opfibration can be strictified
\cite[Propositions~2.6 and~6.4]{DirectedHomotopyDatabases}. Therefore,
Corollary~\ref{cor:dsecat_vector_field_reduction} gives
\[
\mathrm{secat}(\pi_X)
=
\mathrm{dsecat}(\pi_X)
=
\mathrm{dsecat}
\bigl(\pi_{X|_{\operatorname{Fix}(R)}}\bigr)
=
\mathrm{secat}
\bigl(\pi_{X|_{\operatorname{Fix}(R)}}\bigr).
\]
\end{proof}

\begin{corollary}\label{cor:database_vector_field_reduction}
Let \(\CC\) be a finite acyclic category, let
\((R,\varepsilon)\) be a right vector field, and let
\(X\colon\CC\to\Sets\) be a database. Then
\[
\mathrm{dsecat}(\pi_X)
=
\mathrm{dsecat}
\bigl(\pi_{X|_{\operatorname{Fix}(R)}}\bigr),
\]
and restriction induces a natural bijection
\[
\Gamma(\CC,X)
\cong
\Gamma\bigl(
\operatorname{Fix}(R),
X|_{\operatorname{Fix}(R)}
\bigr).
\]
\end{corollary}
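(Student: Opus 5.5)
The statement has two independent parts, and I will treat them separately.

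\emph{Bijection of global sections.} This part is exactly Corollary~\ref{cor:vector_field_database_sections}, so it only needs to be cited. For the reader's convenience I would recall the chain behind it. Theorem~\ref{thm:vector_field_stabilization} shows that some iterate \(R^N\) corestricts to a retraction \(r_R\colon\CC\to\operatorname{Fix}(R)\), and that \(\varepsilon^{(N)}\colon\iota_Rr_R\Rightarrow1_\CC\) is the identity on \(\operatorname{Fix}(R)\). Proposition~\ref{prop:directed_retract_initial} then makes \(\iota_R\) initial. The cofinality theorem gives \(\lim_\CC X\cong\lim_{\operatorname{Fix}(R)}X\iota_R\), and \(\Gamma\cong\lim\) turns this into the stated bijection. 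Naturality in \(X\) comes from the fact that the comparison map is restriction along \(\iota_R\).

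\emph{Directed sectional category.} I would apply Corollary~\ref{cor:dsecat_vector_field_reduction} to \(P=\pi_X\colon\int^\CC X\to\CC\). Two things need checking.

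\begin{enumerate}
\item \(\pi_X\) is a right directed fibration. It is a discrete opfibration, hence a Grothendieck opfibration, and these are right directed fibrations by \cite[Proposition~2.6]{DirectedHomotopyDatabases}. One can also see this directly. Given \(\alpha\colon F\Rightarrow G\) and a lift \(\widetilde F(c)=(F(c),x_c)\), set \(\widetilde G(c)=(G(c),X(\alpha_c)(x_c))\). Functoriality of \(\widetilde G\) and naturality of the lifted \(\alpha\) follow from naturality of \(\alpha\) and functoriality of \(X\).
\item The pullback \(\iota_R^*\pi_X\) is, up to isomorphism over \(\operatorname{Fix}(R)\), the projection \(\pi_{X|_{\operatorname{Fix}(R)}}\). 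This holds because the category of elements is stable under change of base: \(\operatorname{Fix}(R)\times_\CC\int^\CC X\) has objects \((u,x)\) with \(x\in X(u)\), and its morphisms are the morphisms \(f\) of \(\operatorname{Fix}(R)\) with \(X(f)(x)=y\), which is exactly \(\int^{\operatorname{Fix}(R)}X\iota_R\).
\end{enumerate}

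Since \(\mathrm{dsecat}\) is invariant under isomorphism of functors over the base, the corollary then yields the equality of directed sectional categories.

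The main obstacle is modest. It is the second check, the identification of the pullback with the category of elements of the restriction, together with the remark that \(\mathrm{dsecat}\) is invariant under such isomorphisms. Both are routine unwindings of definitions, so the proof is essentially a combination of the two earlier corollaries.
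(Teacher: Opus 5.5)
Your proposal is correct and is essentially the paper's proof. The paper also uses that \(\pi_X\) is a (discrete) Grothendieck opfibration, identifies \(\iota_R^*\pi_X\cong\pi_{X|_{\operatorname{Fix}(R)}}\), and cites Corollary~\ref{cor:vector_field_database_sections} for the bijection of global sections.

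The only difference is cosmetic. The paper routes the \(\mathrm{dsecat}\) equality through Corollary~\ref{cor:secat_opfibration_vector_field}, which is itself Corollary~\ref{cor:dsecat_vector_field_reduction} applied to opfibrations. You apply Corollary~\ref{cor:dsecat_vector_field_reduction} directly and also verify the right directed lifting property and the pullback identification by hand.
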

\begin{proof}
The category-of-elements projection \(\pi_X\) is a discrete
Grothendieck opfibration, and there is a canonical isomorphism
\[
\iota_R^*\pi_X
\cong
\pi_{X|_{\operatorname{Fix}(R)}}.
\]
The equality of sectional categories follows from
Corollary~\ref{cor:secat_opfibration_vector_field}, while the bijection
of global sections follows from
Corollary~\ref{cor:vector_field_database_sections}.
\end{proof}

Thus, right vector-field reductions preserve both the global sections of
a functorial database and the sectional category of its
category-of-elements projection.

\section{Initial scaffolds of acyclic categories}
\label{sec:initial-scaffolds}

We now introduce a second reduction procedure for acyclic categories.
Unlike vector-field reductions, which produce directed deformation
retracts, initial scaffolds produce initial subcategories and therefore
preserve all limits. Dey and Lesnick introduced this construction for
posets with finite downsets as an explicit model for a minimal initial
functor
\cite{dey2026limitcomputationposetsminimal}. We extend it to lower
well-founded acyclic categories by replacing open downsets with punctured
lower comma categories, with particular emphasis on set-valued functors
arising as database instances.

Throughout this section, \(\CC\) is an acyclic category. The existence of
a non-identity morphism defines a strict partial order on its objects:
\[
x<_\CC y
\quad\Longleftrightarrow\quad
\Hom_\CC(x,y)\neq\varnothing.
\]
We say that \(\CC\) is \emph{lower well-founded} if this order is
well-founded, equivalently, if there is no infinite sequence of
non-identity morphisms
\[
\cdots\longrightarrow x_2\longrightarrow x_1\longrightarrow x_0.
\]
Every finite acyclic category is lower well-founded.

For \(c\in\CC\), recall that
\[
\partial(\CC\downarrow c)
=
(\CC\downarrow c)\setminus\{(c,1_c)\}
\]
is the punctured lower comma category of \(c\). When \(\CC\) is a poset,
it is naturally identified with the elements below \(c\).

An object \(m\in\CC\) is a \emph{source} if it receives no non-identity
morphism, equivalently, if
\[
\partial(\CC\downarrow m)=\varnothing.
\]
Set
\[
M_\CC
=
\{m\in\operatorname{Ob}(\CC)
\mid
\partial(\CC\downarrow m)=\varnothing\}
\]
and
\[
E_\CC
=
\left\{
c\in\operatorname{Ob}(\CC)
\ \middle|\
\partial(\CC\downarrow c)\neq\varnothing
\text{ and }
\partial(\CC\downarrow c)\text{ is disconnected}
\right\}.
\]
Finally, put
\[
I_\CC=M_\CC\cup E_\CC.
\]

Separating sources from lower essential objects allows us to avoid the convention, adopted in \cite{dey2026limitcomputationposetsminimal}, that the empty category is disconnected.

\begin{lemma}\label{lem:source-in-each-component}
Let \(\CC\) be lower well-founded. Every connected component of
\(\partial(\CC\downarrow c)\) contains an object
\[
(m,\alpha\colon m\to c)
\]
whose domain \(m\) is a source.
\end{lemma}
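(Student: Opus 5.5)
Fix a connected component \(K\) of \(\partial(\CC\downarrow c)\). Since \(K\) is non-empty, we may choose an object \((x,\beta\colon x\to c)\in K\) with \(x\neq c\). The plan is to descend from \(x\) along the strict order \(<_\CC\) until a source is reached, and then check that the resulting object of the comma category still lies in \(K\).

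For the descent, consider the set
\[
S=\{\,y\in\operatorname{Ob}(\CC)\mid y=x\text{ or }y<_\CC x\,\}.
\]
Lower well-foundedness gives a \(<_\CC\)-minimal element \(m\in S\). If \(m\neq x\), we can fix a non-identity morphism \(\gamma\colon m\to x\); if \(m=x\), we set \(\gamma=1_x\). We claim that \(m\) is a source. If \(\delta\colon y\to m\) were a non-identity morphism, then \(y<_\CC m\), and composing with \(\gamma\) gives \(\gamma\delta\colon y\to x\). This composite is not an identity: otherwise \(y=x\), and then there would be morphisms \(x\to m\) and \(m\to x\) with \(m\neq x\) (acyclicity forces \(y\neq m\)), which is impossible. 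Hence \(y<_\CC x\), so \(y\in S\) and \(y<_\CC m\), contradicting minimality. Note that we only use the orientation of \(<_\CC\) given by non-identity morphisms and the no-two-way-arrows clause of acyclicity; no transitivity argument beyond this composition is needed.

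Next, put \(\alpha=\beta\gamma\colon m\to c\). Then \(m\neq c\). Indeed, if \(m=c\), then \(\alpha\) is an endomorphism and hence \(1_c\). But the morphisms \(\gamma\colon c\to x\) and \(\beta\colon x\to c\) with \(x\neq c\) would then contradict acyclicity. So \((m,\alpha)\) lies in \(\partial(\CC\downarrow c)\). Finally, \(\gamma\) is a morphism \((m,\alpha)\to(x,\beta)\) in \(\CC\downarrow c\), since \(\beta\gamma=\alpha\). Both of its endpoints lie in the full subcategory \(\partial(\CC\downarrow c)\), so \((m,\alpha)\) belongs to the same component \(K\).

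The main obstacle is the minimality step. Here one must take care that \(<_\CC\), defined via non-identity morphisms, behaves like a strict order, in particular that composites of non-identity morphisms remain non-identity. This is handled by acyclicity exactly as above. Everything else is routine bookkeeping.
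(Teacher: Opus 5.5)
Your proof is correct and follows essentially the same route as the paper: you descend from an arbitrary object \((x,\beta)\) of the component to a source \(m\) with a morphism \(\gamma\colon m\to x\), and then use \(\gamma\colon(m,\beta\gamma)\to(x,\beta)\) to stay in the same component. The differences are cosmetic. You obtain \(m\) as a \(<_\CC\)-minimal element of the down-set of \(x\) rather than by recursively choosing non-identity morphisms, and you explicitly check that \((m,\beta\gamma)\) lies in the punctured comma category, a point the paper leaves implicit.
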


\begin{proof}
Starting with an object \((x,\beta\colon x\to c)\), choose a
non-identity morphism \(x_1\to x\) whenever \(x\) is not a source, and
continue recursively. Lower well-foundedness implies that this process
terminates at a source \(m\). The resulting composite \(v\colon m\to x\)
defines a morphism
\[
v\colon(m,\beta v)\longrightarrow(x,\beta)
\]
in \(\partial(\CC\downarrow c)\), so both objects lie in the same connected component.
\end{proof}

\subsection{Initial scaffolds}

\begin{definition}\label{def:categorical-initial-scaffold}
Let \(\CC\) be a lower well-founded acyclic category. An
\emph{initial scaffold} of \(\CC\) is a subcategory
\(\mathcal P\subseteq\CC\) constructed as follows:

\begin{enumerate}
    \item
    \[
    \operatorname{Ob}(\mathcal P)=I_\CC.
    \]

    \item For every \(c\in E_\CC\) and every connected component
    \(A\in\pi_0(\partial(\CC\downarrow c))\), choose an object
    \[
    (m_A,\alpha_A\colon m_A\to c)\in A
    \]
    with \(m_A\in M_\CC\).

    \item The non-identity morphisms of \(\mathcal P\) are precisely the
    chosen morphisms \(\alpha_A\colon m_A\to c\).
\end{enumerate}
\end{definition}

The choices in the definition exist by
Lemma~\ref{lem:source-in-each-component}. They determine a subcategory
because the domain of every chosen non-identity morphism is a source,
whereas its codomain lies in \(E_\CC\) and is therefore not a source.
Hence no two chosen non-identity morphisms are composable.

For posets, this recovers the usual initial scaffold: its objects are the
minimal and lower essential elements, and one relation from a minimal
element is chosen in each connected component of the open lower set of
every lower essential element.

\subsection{Initiality}

Let \(j\colon\mathcal P\hookrightarrow\CC\) be the inclusion. Every
morphism \(\alpha\colon x\to c\) induces by postcomposition a functor
\[
\alpha_*
\colon
(j\downarrow x)\longrightarrow(j\downarrow c),
\qquad
(p,\gamma)\longmapsto(p,\alpha\gamma).
\]

\begin{lemma}\label{lem:connected-lower-comma}
Let \(c\notin\operatorname{Ob}(\mathcal P)\), and suppose that
\((j\downarrow x)\) is non-empty and connected for every \(x<_\CC c\).
If \(\partial(\CC\downarrow c)\) is non-empty and connected, then
\((j\downarrow c)\) is non-empty and connected.
\end{lemma}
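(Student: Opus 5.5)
The plan is to build a decomposition of \((j\downarrow c)\) out of the categories \((j\downarrow x)\) for the objects \((x,\beta)\) of \(\partial(\CC\downarrow c)\), and then transport connectivity of \(\partial(\CC\downarrow c)\) to \((j\downarrow c)\).

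\emph{Non-emptiness.} Since \(\partial(\CC\downarrow c)\) is non-empty, choose an object \((x,\beta\colon x\to c)\) in it. Then \(x<_\CC c\), so \((j\downarrow x)\) is non-empty by hypothesis. Applying \(\beta_*\) to any of its objects produces an object of \((j\downarrow c)\).

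\emph{Every object comes from below.} Let \((p,\gamma\colon p\to c)\) be an object of \((j\downarrow c)\), with \(p\in\operatorname{Ob}(\mathcal P)\). Since \(c\notin\operatorname{Ob}(\mathcal P)\), we have \(p\neq c\). Hence \(\gamma\) is not an identity, and \((p,\gamma)\in\partial(\CC\downarrow c)\). Moreover \((p,1_p)\in(j\downarrow p)\), and \(\gamma_*(p,1_p)=(p,\gamma)\). So every object of \((j\downarrow c)\) lies in the image of \(\beta_*\) for some \((x,\beta)\in\partial(\CC\downarrow c)\).

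\emph{Each image is connected.} For each \((x,\beta)\in\partial(\CC\downarrow c)\), the functor \(\beta_*\) sends the connected category \((j\downarrow x)\) into a single connected component of \((j\downarrow c)\). Call this component \(K(x,\beta)\).

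\emph{The components agree along morphisms.} Let \(h\colon(x,\beta)\to(y,\beta')\) be a morphism in \(\partial(\CC\downarrow c)\), so that \(\beta'h=\beta\). Then
\[
\beta_*=\beta'_*\circ h_*.
\]
Pick any object \(o\) of \((j\downarrow x)\). Then \(\beta_*(o)=\beta'_*(h_*(o))\) lies in both \(K(x,\beta)\) and \(K(y,\beta')\), so these components coincide. Since \(\partial(\CC\downarrow c)\) is connected, any two of its objects are joined by a zigzag. Propagating along the zigzag shows that \(K(x,\beta)\) is one and the same component \(K\) for all objects \((x,\beta)\).

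\emph{Conclusion.} By the second step, every object of \((j\downarrow c)\) lies in \(K\). Hence \((j\downarrow c)\) is connected, and it is non-empty by the first step.

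The main point of care is the second step. It uses \(c\notin\mathcal P\) to guarantee that \(\gamma\) is non-identity, so that \((p,\gamma)\) really lies in the punctured category, and it uses \(p<_\CC c\) so that the inductive hypothesis applies to \((j\downarrow p)\). The rest is bookkeeping with the functors \(\beta_*\). The identity \(\beta_*=\beta'_*h_*\) holds by associativity, and the hypothesis applies to each \(y\) with \((y,\beta')\in\partial(\CC\downarrow c)\) because \(y<_\CC c\).
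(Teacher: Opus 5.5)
Your proposal is correct and follows essentially the same argument as the paper. The paper works with the full subcategories \(S_a\) spanned by the images of \(\alpha_*\), shows \(S_{(x,\alpha)}\subseteq S_{(y,\beta)}\) along morphisms, and glues them along zigzags in \(\partial(\CC\downarrow c)\); you instead track the single connected component containing each image, which is the same bookkeeping, and both arguments use \(c\notin\operatorname{Ob}(\mathcal P)\) in the same way to write every \((p,\gamma)\) as \(\gamma_*(p,1_p)\) with \((p,\gamma)\in\partial(\CC\downarrow c)\).
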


\begin{proof}
For every
\[
a=(x,\alpha)\in\partial(\CC\downarrow c),
\]
let \(S_a\) be the full subcategory of \(j\downarrow c\) spanned by the
objects in the image of
\[
\alpha_*
\colon
(j\downarrow x)\longrightarrow(j\downarrow c).
\]
Since \(x<_\CC c\), the category \(j\downarrow x\) is non-empty and
connected by hypothesis. Hence \(S_a\) is non-empty and connected.
A morphism
\[
u\colon(x,\alpha)\longrightarrow(y,\beta)
\]
in \(\partial(\CC\downarrow c)\) satisfies
\(\alpha=\beta u\), and hence
\(
\alpha_*=\beta_*u_*.
\)
Therefore,
\(
S_{(x,\alpha)}
\subseteq
S_{(y,\beta)}.
\)

Since \(\partial(\CC\downarrow c)\) is connected, any two of its objects
are joined by a zigzag. The corresponding subcategories \(S_a\) form a
zigzag of inclusions, so consecutive members have non-empty intersection.
As every \(S_a\) is non-empty and connected, their union is connected.

Finally, every \((p,\gamma)\in(j\downarrow c)\) belongs to this union.
Indeed, \(c\notin\operatorname{Ob}(\mathcal P)\) implies that \(\gamma\)
is non-identity, so \((p,\gamma)\in \partial(\CC\downarrow c)\), and
\[
(p,\gamma)=\gamma_*(p,1_p)\in S_{(p,\gamma)}.
\]
Thus, the \(S_a\) cover \((j\downarrow c)\), which is therefore
non-empty and connected.
\end{proof}

\begin{theorem}\label{thm:initial-scaffold-is-initial}
Let \(\CC\) be a lower well-founded acyclic category and let
\(\mathcal P\subseteq\CC\) be an initial scaffold. Then the inclusion
\[
j\colon\mathcal P\hookrightarrow\CC
\]
is initial.
\end{theorem}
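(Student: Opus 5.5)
We prove that \(j\downarrow c\) is non-empty and connected for every \(c\in\CC\), by well-founded induction on \(c\) with respect to \(<_\CC\). Lower well-foundedness is exactly what permits this induction. We may therefore assume that \(j\downarrow x\) is non-empty and connected for every \(x<_\CC c\), and we split into three cases according to the position of \(c\).

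\emph{Case 1: \(c\in M_\CC\).} A source receives only its identity morphism, so the objects of \(j\downarrow c\) are pairs \((p,\gamma\colon p\to c)\) with \(\gamma=1_c\). Thus \(j\downarrow c\) has the single object \((c,1_c)\). Its only endomorphism is a morphism \(p\to p\) in \(\mathcal P\), which is an identity by acyclicity. Hence \(j\downarrow c\) is terminal-like: non-empty and connected.

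\emph{Case 2: \(c\notin I_\CC\).} Then \(c\) is not a source, so \(\partial(\CC\downarrow c)\) is non-empty, and it is connected because \(c\notin E_\CC\). Lemma~\ref{lem:connected-lower-comma} applies directly and gives the claim.

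\emph{Case 3: \(c\in E_\CC\).} This is the main step, because \(c\in\mathcal P\) and Lemma~\ref{lem:connected-lower-comma} does not apply. The object \((c,1_c)\) lies in \(j\downarrow c\), so the category is non-empty. We show that every object is connected to \((c,1_c)\).
\begin{enumerate}
\item Take an object \((p,\gamma)\) with \(\gamma\neq 1_c\). Then \(a=(p,\gamma)\in\partial(\CC\downarrow c)\) lies in some component \(A\).
\item As in the proof of Lemma~\ref{lem:connected-lower-comma}, use the subcategories \(S_a=\operatorname{im}(\alpha_*)\) for \(a\in A\). By the induction hypothesis each is connected, and they are linked along zigzags in \(A\). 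Hence \(\bigcup_{a\in A}S_a\) is connected.
\item This union contains \((p,\gamma)=\gamma_*(p,1_p)\). It also contains \((m_A,\alpha_A)=(\alpha_A)_*(m_A,1_{m_A})\), since \(m_A\in M_\CC\subseteq\mathcal P\).
\item Because \(\alpha_A\) is a morphism of \(\mathcal P\), it defines a morphism \((m_A,\alpha_A)\to(c,1_c)\) in \(j\downarrow c\).
\item Therefore \((p,\gamma)\) is connected to \((c,1_c)\).
\end{enumerate}
Finally, the only object of \(j\downarrow c\) with \(\gamma=1_c\) is \((c,1_c)\) itself, since \(p\) must then equal \(c\). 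So every object is connected to \((c,1_c)\).

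The delicate point is Case 3. The chosen morphism \(\alpha_A\) is what ties each component \(A\) back to the identity object \((c,1_c)\), and the components \(S_a\) must be assembled exactly as in Lemma~\ref{lem:connected-lower-comma}. This works because, for \(a=(x,\alpha)\) with \(x<_\CC c\), the induction hypothesis makes each \(S_a\) connected.

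Having checked all three cases, the induction shows that every \(j\downarrow c\) is non-empty and connected, so \(j\) is initial.
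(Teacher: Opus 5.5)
Your proof is correct and follows the paper's argument essentially step for step. It uses well-founded induction on \(<_\CC\), with sources handled directly (implicitly using \(c\in M_\CC\subseteq\operatorname{Ob}(\mathcal P)\), so that \((c,1_c)\) is an object), and Lemma~\ref{lem:connected-lower-comma} for \(c\notin I_\CC\). For \(c\in E_\CC\) it takes the connected union of the \(S_a\) over each component \(A\) and ties it to \((c,1_c)\) through the chosen morphism \(\alpha_A\); the only cosmetic difference is that you treat sources as a case of the inductive step rather than as the paper's separate base case.
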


\begin{proof}
We prove by well-founded induction on \(<_\CC\) that
\((j\downarrow c)\) is non-empty and connected for every \(c\in\CC\).

For the base case, a minimal element \(c\) for the order \(<_\CC\)\ is an element in \(M_\CC\), hence \(c\in\mathcal P\), and the only morphism with
codomain \(c\) is \(1_c\). Hence
\(
(j\downarrow c)=\{(c,1_c)\}.
\)

Now assume that the statement is true for all \(x <_\CC c\). If \(c\notin I_\CC\), then \(\partial(\CC\downarrow c)\) is non-empty and connected. The
induction hypothesis applies to the domain of every object of
\(\partial(\CC\downarrow c)\), so the conclusion follows from
Lemma~\ref{lem:connected-lower-comma}.

It remains to consider the case \(c\in E_\CC\). Since
\(c\in\operatorname{Ob}(\mathcal P)\), the pair
\(
z_c=(c,1_c)
\)
is an object of \(j\downarrow c\).

Fix a connected component
\[
A\in\pi_0\bigl(\partial(\CC\downarrow c)\bigr).
\]
For each \(a=(x,\alpha)\in A\), let \(S_a\) be the full subcategory of
\(j\downarrow c\) spanned by the objects in the image of
\[
\alpha_*
\colon
(j\downarrow x)\longrightarrow(j\downarrow c).
\]
Since \(x<_\CC c\), the induction hypothesis implies that
\(j\downarrow x\) is non-empty and connected. Therefore, \(S_a\) is also
non-empty and connected.

If
\[
u\colon(x,\alpha)\longrightarrow(y,\beta)
\]
is a morphism in \(A\), then \(\alpha=\beta u\), and hence
\(
\alpha_*=\beta_*u_*.
\)
It follows that
\(
S_{(x,\alpha)}
\subseteq
S_{(y,\beta)}.
\)

Since \(A\) is connected, any two of its objects are joined by a zigzag.
The corresponding subcategories \(S_a\) form a zigzag of inclusions, so
consecutive members have non-empty intersection.

Let \(U_A\) be the full subcategory of \(j\downarrow c\) spanned by the
objects belonging to some \(S_a\), with \(a\in A\). The preceding
argument shows that \(U_A\) is non-empty and connected.

Now let
\[
(m_A,\alpha_A\colon m_A\to c)\in A
\]
be the representative chosen in the scaffold. Since \(m_A\in M_\CC\),
we have \(m_A\in\operatorname{Ob}(\mathcal P)\), and
\[
(m_A,\alpha_A)
=
(\alpha_A)_*(m_A,1_{m_A})
\]
is an object of \(U_A\). Moreover, since \(\alpha_A\) is a morphism of
\(\mathcal P\), it determines a morphism
\[
(m_A,\alpha_A)\longrightarrow(c,1_c)=z_c
\]
in \(j\downarrow c\). Hence every object of \(U_A\) is connected to
\(z_c\).

Finally, let \((p,\gamma)\) be any object of \(j\downarrow c\) distinct
from \(z_c\). Then \(\gamma\colon p\to c\) is non-identity and therefore
determines an object
\[
(p,\gamma)\in\partial(\CC\downarrow c).
\]
Let \(A\) be its connected component. Since
\[
(p,\gamma)=\gamma_*(p,1_p),
\]
the object \((p,\gamma)\) belongs to \(U_A\), and is consequently
connected to \(z_c\).

Thus, every object of \(j\downarrow c\) is connected to \(z_c\). Since
\(z_c\) is itself an object of \(j\downarrow c\), this comma category is
non-empty and connected. The induction is complete, and therefore \(j\)
is initial.
\end{proof}
\begin{remark}\label{rem:finite-case}
For a finite acyclic category, the preceding argument can be carried out
by ordinary induction along any linear extension of \(<_\CC\).
\end{remark}

\begin{corollary}\label{cor:scaffold-preserves-limits}
Let \(\mathcal P\subseteq\CC\) be an initial scaffold. For every functor
\(F\colon\CC\to\E\) for which the relevant limits exist, restriction
induces a natural isomorphism
\[
\lim_\CC F
\cong
\lim_{\mathcal P}F|_{\mathcal P}.
\]
\end{corollary}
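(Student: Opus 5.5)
This corollary follows from Theorem~\ref{thm:initial-scaffold-is-initial} together with the cofinality theorem for initial functors, in the same way that Corollary~\ref{cor:directed_retract_limits} followed from Proposition~\ref{prop:directed_retract_initial}. The plan has three steps.

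First, apply Theorem~\ref{thm:initial-scaffold-is-initial}. It says the inclusion \(j\colon\mathcal P\hookrightarrow\CC\) is initial, meaning that for every \(c\in\CC\) the comma category \(j\downarrow c\) is non-empty and connected. This is exactly the hypothesis of the dual form of Mac Lane's final-functor theorem \cite[Chapter~IX, Section~3]{MacLaneCategories}. That theorem gives, for every functor \(F\colon\CC\to\E\), a canonical comparison map
\[
\lim_\CC F\longrightarrow\lim_{\mathcal P}Fj .
\]
Concretely, the map restricts a limiting cone over \(\CC\) to the objects of \(\mathcal P\). The theorem asserts that this map is an isomorphism: if either limit exists, so does the other, and they agree. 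Since \(Fj=F|_{\mathcal P}\), this is the claimed isomorphism.

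Second, check naturality in \(F\). The comparison map is induced by precomposition with \(j\), that is, by restricting cones. A natural transformation \(F\Rightarrow F'\) acts on cones componentwise, so it commutes with this restriction, and naturality follows.

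If one wants the argument self-contained rather than cited, the key step is the inverse construction: every cone \((\lambda_p)_{p\in\mathcal P}\) over \(F|_{\mathcal P}\) extends uniquely to a cone over \(F\).
\begin{itemize}
\item To define the extension at \(c\in\CC\), pick any \((p,\gamma)\in j\downarrow c\), which exists because \(j\downarrow c\) is non-empty, and set \(\lambda_c=F(\gamma)\lambda_p\).
\item This is independent of the choice. A morphism \(h\colon(p,\gamma)\to(p',\gamma')\) in \(j\downarrow c\) is a morphism of \(\mathcal P\) with \(\gamma'h=\gamma\), and the cone condition on \(\mathcal P\) gives \(F(h)\lambda_p=\lambda_{p'}\). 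Hence \(F(\gamma')\lambda_{p'}=F(\gamma)\lambda_p\), and connectedness of \(j\downarrow c\) propagates this equality along zigzags.
\end{itemize}
I expect this well-definedness step to be the only point needing care. It is exactly where connectedness of the comma categories is used, and it is already guaranteed by Theorem~\ref{thm:initial-scaffold-is-initial}.
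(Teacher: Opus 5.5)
Your proposal is correct and matches the paper's proof: the paper also deduces the corollary by combining Theorem~\ref{thm:initial-scaffold-is-initial} with the dual of Mac Lane's finality theorem \cite[Chapter~IX, Section~3]{MacLaneCategories}, exactly as in Corollary~\ref{cor:directed_retract_limits}. Your extension-of-cones sketch just unpacks that cited theorem. To finish it you would also note that the extended family is a cone, since \((p,f\gamma)\in j\downarrow d\) for every \(f\colon c\to d\), and that it restricts to \(\lambda\) on \(\mathcal P\), by taking \((p,1_p)\).
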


\begin{proof}
As in Corollary~\ref{cor:directed_retract_limits}, this is the dual of the finality theorem for final functors \cite[Chapter~IX, Section~3]{MacLaneCategories}.
\end{proof}

\begin{corollary}\label{cor:scaffold-database-sections}
For every database \(X\colon\CC\to\Sets\), restriction to an initial
scaffold induces a natural bijection
\[
\Gamma(\CC,X)
\cong
\Gamma\bigl(\mathcal P,X|_{\mathcal P}\bigr).
\]
\end{corollary}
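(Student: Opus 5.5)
This follows by combining Corollary~\ref{cor:scaffold-preserves-limits} with the identification of global sections with limits recalled in Section~\ref{sec:databases_retractions}. Three steps are needed.

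\textbf{Step 1.} Since \(\Sets\) is complete, both \(\lim_\CC X\) and \(\lim_{\mathcal P}X|_{\mathcal P}\) exist. By Theorem~\ref{thm:initial-scaffold-is-initial} the inclusion \(j\colon\mathcal P\hookrightarrow\CC\) is initial. Corollary~\ref{cor:scaffold-preserves-limits} therefore gives a natural isomorphism \(\lim_\CC X\cong\lim_{\mathcal P}X|_{\mathcal P}\), induced by the canonical comparison map.

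\textbf{Step 2.} Compose this isomorphism with the natural identifications \(\Gamma(\CC,X)\cong\lim_\CC X\) and \(\Gamma(\mathcal P,X|_{\mathcal P})\cong\lim_{\mathcal P}X|_{\mathcal P}\). Both come from describing a section of \(\pi_X\) as a compatible family \((x_c)\).

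\textbf{Step 3.} Check that the resulting bijection really is restriction of sections. This is the only point needing care. The comparison map \(\lim_\CC X\to\lim_{\mathcal P}Xj\) is the map induced by the cone obtained by restricting the limit cone to \(\mathcal P\). Under the identification with compatible families, it sends \((x_c)_{c\in\CC}\) to \((x_p)_{p\in I_\CC}\), which is exactly restriction.

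Naturality in \(X\) holds because every map involved is natural in \(X\). Concretely, a natural transformation \(X\Rightarrow Y\) acts componentwise on families, and componentwise action commutes with forgetting the components not indexed by \(\mathcal P\).

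Alternatively, one can argue directly. Injectivity holds because for every \(c\) the category \(j\downarrow c\) is non-empty, so there is some \(\gamma\colon p\to c\) with \(p\in\mathcal P\), and then \(x_c=X(\gamma)(x_p)\) is determined by the restricted family. Surjectivity follows by defining \(x_c=X(\gamma)(x_p)\) for a compatible family on \(\mathcal P\); this is independent of the choice of \((p,\gamma)\) because \(j\downarrow c\) is connected. The corollary route is shorter, so I will present that one.
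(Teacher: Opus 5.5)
Your proposal is correct and takes the same route as the paper, which proves the corollary by combining Corollary~\ref{cor:scaffold-preserves-limits} with the identification \(\Gamma(\CC,X)\cong\lim_\CC X\). Your check that the composite bijection is literally restriction of compatible families is sound, as is your alternative direct argument via the non-emptiness and connectedness of \(j\downarrow c\); both make explicit what the paper leaves implicit.
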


\begin{proof}
This follows from
Corollary~\ref{cor:scaffold-preserves-limits} and the identification of
global sections with limits.
\end{proof}

Thus, an initial scaffold may be regarded as a reduced database schema
that preserves, uniformly for every instance, its globally coherent
selections. Unlike a vector-field reduction, it need not be a directed
deformation retract; its defining property is the preservation of all
limits.

\subsection{Invariance under vector-field reductions}

We now relate initial scaffolds to the directed reductions studied in
Section~\ref{sec:homotopy_retracts_vector_fields}. The construction of an
initial scaffold depends on the sources, the lower essential objects, and
the connected components of the punctured lower comma categories. We shall
show that all these data are preserved by right directed deformation
retracts.

We first record that a directed retract induces a bijection on connected
components.

\begin{lemma}\label{lem:directed_retract_components}
Let
\[
\CC
\mathrel{\substack{\xrightarrow{F}\\[-1mm]\xleftarrow[G]{}}}
\Dc
\]
be functors such that \(GF=1_\CC\), and suppose that there is a natural
transformation
\(
FG\Rightarrow1_\Dc
\)
or
\(
1_\Dc\Rightarrow FG.
\)
Then \(F\) induces a bijection
\[
\pi_0(F)\colon\pi_0(\CC)\longrightarrow\pi_0(\Dc),
\]
whose inverse is induced by \(G\).
\end{lemma}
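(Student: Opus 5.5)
The argument is functoriality of \(\pi_0\) together with the observation that a natural transformation between two functors into \(\Dc\) joins, for every object, their values by a single morphism. First, any functor \(H\colon\CC\to\Dc\) sends zigzags to zigzags: if \(I\colon\mathbb I_n\to\CC\) is a zigzag from \(c\) to \(c'\), then \(HI\) is a zigzag from \(H(c)\) to \(H(c')\). Hence \(F\) and \(G\) induce well-defined maps
\[
\pi_0(F)\colon\pi_0(\CC)\to\pi_0(\Dc),
\qquad
\pi_0(G)\colon\pi_0(\Dc)\to\pi_0(\CC).
\]
Moreover, \(\pi_0\) respects composition and identities, since both are computed on representatives.

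Next I would verify the two composites.

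\textbf{The composite \(\pi_0(G)\pi_0(F)\).} Since \(GF=1_\CC\), functoriality gives \(\pi_0(G)\pi_0(F)=\pi_0(GF)=1_{\pi_0(\CC)}\).

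\textbf{The composite \(\pi_0(F)\pi_0(G)\).} Suppose first that \(\beta\colon FG\Rightarrow1_\Dc\). For every \(d\in\Dc\), the component \(\beta_d\colon FG(d)\to d\) is a zigzag of length one, namely a functor \(\mathbb I_1\to\Dc\). Therefore \(FG(d)\) and \(d\) lie in the same component, and \(\pi_0(F)\pi_0(G)[d]=[FG(d)]=[d]\). If instead \(\beta\colon1_\Dc\Rightarrow FG\), the component \(\beta_d\colon d\to FG(d)\) gives the same conclusion, read in the opposite direction. Naturality of \(\beta\) is not even needed here, only the existence of the components.

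Combining the two composites, \(\pi_0(F)\) is a bijection with inverse \(\pi_0(G)\). The proof has no real obstacle. The only point requiring care is the well-definedness of \(\pi_0(H)\) for an arbitrary functor, which is immediate from the description of connected components via functors \(\mathbb I_n\to\CC\). If \(\CC\) is empty, the hypothesis \(GF=1_\CC\) forces nothing, but \(\Dc\) maps to \(\CC\) by \(G\), so \(\Dc\) is empty as well and the statement holds trivially.
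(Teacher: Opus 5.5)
Your proof is correct and is essentially the paper's argument: functors preserve zigzags, so $\pi_0$ is functorial; $GF=1_\CC$ gives $\pi_0(G)\pi_0(F)=1$; and each component of the transformation between $FG$ and $1_\Dc$ joins $FG(d)$ to $d$, so $\pi_0(F)\pi_0(G)=1$. Your remarks that naturality is never used and that the empty case needs no separate treatment are accurate, though the latter discussion is superfluous since the general argument already covers it.
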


\begin{proof}
Every functor sends zigzags to zigzags and therefore induces a map on
connected components. Moreover, naturally related functors induce the
same map on connected components: if
\(\eta\colon H\Rightarrow K\) or \(\eta\colon K\Rightarrow H\), then
\(\eta_c\) joins \(H(c)\) and \(K(c)\) for every object \(c\).

The equality \(GF=1_\CC\) gives
\[
\pi_0(G)\pi_0(F)
=
1_{\pi_0(\CC)}.
\]
Similarly, the natural transformation between \(FG\) and \(1_\Dc\)
implies
\[
\pi_0(F)\pi_0(G)
=
\pi_0(FG)
=
1_{\pi_0(\Dc)}.
\]
Thus, \(\pi_0(F)\) and \(\pi_0(G)\) are mutually inverse.
\end{proof}

We can now prove the invariance result at the level of directed
deformation retracts.

\begin{proposition}\label{prop:right_retract_preserves_scaffolds}
Let \(\CC\) be a lower well-founded acyclic category, and let
\[
\Uc
\mathrel{\substack{\xrightarrow{\iota}\\[-1mm]\xleftarrow[r]{}}}
\CC
\]
be a right directed deformation retract onto a full subcategory. Then
\(\CC\) and \(\Uc\) have the same initial scaffolds. More precisely, a
subcategory \(\mathcal{P}\subseteq\Uc\) is an initial scaffold of \(\Uc\) if and
only if it is an initial scaffold of \(\CC\).
\end{proposition}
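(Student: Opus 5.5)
The plan is to show that every datum entering Definition~\ref{def:categorical-initial-scaffold} is the same whether it is computed in \(\CC\) or in \(\Uc\). These data are the sets \(M\) and \(E\), the connected components of the punctured lower comma categories, and the admissible choices of representatives. First, \(\Uc\) is a full subcategory of an acyclic, lower well-founded category, so it is again acyclic and lower well-founded, and its initial scaffolds make sense. Write \(\alpha\colon\iota r\Rightarrow1_\CC\) for the deformation, with \(\alpha\iota=1_\iota\) and \(r\iota=1_\Uc\).

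\emph{Step 1: objects outside \(\Uc\) are neither sources nor lower essential.} Let \(c\notin\Uc\). Since \(r(c)\in\Uc\), we have \(r(c)\neq c\), so \(\alpha_c\colon r(c)\to c\) is non-identity. Hence \(c\notin M_\CC\) and \((r(c),\alpha_c)\in\partial(\CC\downarrow c)\). Now take any \((x,v)\in\partial(\CC\downarrow c)\). Naturality gives \(v\alpha_x=\alpha_c\,r(v)\). The morphism \(v\alpha_x\colon r(x)\to c\) is non-identity because \(r(x)\in\Uc\) while \(c\notin\Uc\). This yields a zigzag
\[
(x,v)\xleftarrow{\ \alpha_x\ }(r(x),v\alpha_x)\xrightarrow{\ r(v)\ }(r(c),\alpha_c)
\]
in \(\partial(\CC\downarrow c)\). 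Therefore \(\partial(\CC\downarrow c)\) is non-empty and connected, so \(c\notin I_\CC\). It follows that \(I_\CC\subseteq\operatorname{Ob}(\Uc)\), and in particular \(M_\CC\subseteq\operatorname{Ob}(\Uc)\).

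\emph{Step 2: components of punctured comma categories agree for \(u\in\Uc\).} This is the main step. Let \(u\in\Uc\). Because \(\Uc\) is full, \(\partial(\Uc\downarrow u)\) is a full subcategory of \(\partial(\CC\downarrow u)\); call the inclusion \(F\). Define \(G\colon\partial(\CC\downarrow u)\to\partial(\Uc\downarrow u)\) by \((x,v)\mapsto(r(x),r(v))\). Naturality together with \(\alpha_u=1_u\) gives \(r(v)=v\alpha_x\), so \(G\) lands in the comma category over \(u\). The delicate point, which I expect to be the main obstacle, is that \(G\) must avoid the puncture, that is, \(r(x)\neq u\). Suppose \(r(x)=u\). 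Then \(v\alpha_x\colon u\to u\) is an endomorphism and hence an identity. So \(\alpha_x\colon u\to x\) and \(v\colon x\to u\) both exist, and acyclicity forces \(x=u\) and \(v=1_u\). This contradicts \(v\) being non-identity.

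With \(G\) well defined, we have \(GF=1\) because \(r\iota=1_\Uc\) and \(\alpha\iota=1_\iota\). The components \(\alpha_x\) define a natural transformation \(FG\Rightarrow1\). Lemma~\ref{lem:directed_retract_components} then shows that \(F\) induces a bijection \(\pi_0(\partial(\Uc\downarrow u))\cong\pi_0(\partial(\CC\downarrow u))\).

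\emph{Step 3: conclusion.} The bijection in Step 2 preserves emptiness, and it also preserves being disconnected, namely having at least two components. Combined with Step 1, this gives \(M_\Uc=M_\CC\) and \(E_\Uc=E_\CC\), hence \(I_\Uc=I_\CC\). For \(c\in E_\CC\), each component \(A\) of \(\partial(\CC\downarrow c)\) meets \(\partial(\Uc\downarrow c)\) in exactly one component \(A'\), because \(F\) induces a bijection on components. Any admissible representative \((m_A,\alpha_A)\in A\) with \(m_A\) a source has \(m_A\in M_\CC\subseteq\Uc\), so it lies in \(A'\). Conversely, every admissible choice in \(A'\) is an admissible choice in \(A\). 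The families of admissible choices therefore coincide. Since the non-identity morphisms of a scaffold are exactly these chosen morphisms, a subcategory \(\mathcal P\subseteq\Uc\) is an initial scaffold of \(\Uc\) if and only if it is an initial scaffold of \(\CC\).
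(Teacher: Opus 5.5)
Your proposal is correct and follows the paper's proof essentially step for step. You use the same retraction $(x,v)\mapsto(r(x),v\alpha_x)$ of punctured lower comma categories, the same acyclicity argument for avoiding the puncture, and Lemma~\ref{lem:directed_retract_components}; you use the same zigzag through $(r(c),\alpha_c)$ to exclude objects of $\CC\setminus\Uc$ from $I_\CC$, and you conclude the same way about sources, lower essential objects and representatives, differing only in presenting the two main steps in the opposite order and in noting explicitly that $\Uc$ inherits acyclicity and lower well-foundedness.
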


\begin{proof}
Let
\(
\alpha\colon\iota r\Rightarrow1_\CC
\)
be the natural transformation associated with the retract. Thus,
\(
r\iota=1_\Uc
\text{ and }
\alpha\iota=1_\iota.
\)

We first compare the punctured lower comma categories at the objects of
\(\Uc\). For every \(d\in\Uc\), the inclusion induces a functor
\[
\iota_d\colon
\partial(\Uc\downarrow d)
\longrightarrow
\partial(\CC\downarrow d).
\]
Define
\[
r_d\colon
\partial(\CC\downarrow d)
\longrightarrow
\partial(\Uc\downarrow d)
\]
on objects by
\[
r_d(x,f)
=
\bigl(r(x),f\alpha_x\bigr)
\]
and on morphisms by applying \(r\).

Let us check that \(r_d\) takes values in the punctured comma category.
If \(f\alpha_x=1_d\), then \(r(x)=d\), and hence
\[
d=\iota r(x)
\xrightarrow{\alpha_x}
x
\xrightarrow{f}
d
\]
is a directed cycle. Since \(\CC\) is acyclic, both morphisms must be
identities. It follows that \(x=d\) and \(f=1_d\), contradicting
\[
(x,f)\in\partial(\CC\downarrow d).
\]
Thus \(f\alpha_x\) is non-identity.

The functors \(\iota_d\) and \(r_d\) satisfy
\(
r_d\iota_d
=
1_{\partial(\Uc\downarrow d)}.
\)
Moreover, the components of \(\alpha\) define a natural transformation
\(
\iota_dr_d
\Rightarrow
1_{\partial(\CC\downarrow d)}.
\)

Indeed, for every \((x,f)\in\partial(\CC\downarrow d)\), the component is
the morphism
\[
\alpha_x\colon
\bigl(\iota r(x),f\alpha_x\bigr)
\longrightarrow
(x,f).
\]
Therefore, Lemma~\ref{lem:directed_retract_components} gives a bijection
\[
\pi_0\bigl(\partial(\Uc\downarrow d)\bigr)
\cong
\pi_0\bigl(\partial(\CC\downarrow d)\bigr).
\]

We next show that no object of \(\CC\setminus\Uc\) belongs to \(I_\CC\).
Let \(c\in\CC\setminus\Uc\). The component
\(
\alpha_c\colon\iota r(c)\longrightarrow c
\)
is non-identity, since its domain belongs to \(\Uc\) whereas \(c\) does
not. Hence \(\partial(\CC\downarrow c)\) is non-empty.

Let
\(
(x,f\colon x\to c)
\)
be any object of \(\partial(\CC\downarrow c)\). Naturality of \(\alpha\)
gives the commutative square
\[
\begin{tikzcd}[column sep=large]
\iota r(x)
  \arrow[r, "\iota r(f)"]
  \arrow[d, "\alpha_x"']
&
\iota r(c)
  \arrow[d, "\alpha_c"]
\\
x
  \arrow[r, "f"']
&
c.
\end{tikzcd}
\]
Thus, in \(\partial(\CC\downarrow c)\), we have a zigzag
\[
\begin{tikzcd}[column sep=large]
&
\bigl(\iota r(x),f\alpha_x\bigr)
  \arrow[dl, "\alpha_x"']
  \arrow[dr, "\iota r(f)"]
&
\\
(x,f)
&
&
\bigl(\iota r(c),\alpha_c\bigr).
\end{tikzcd}
\]
Consequently, every object of \(\partial(\CC\downarrow c)\) is connected
to \((\iota r(c),\alpha_c)\). Hence this punctured comma category is
non-empty and connected, and therefore
\(
c\notin I_\CC.
\)

We have shown that every object of \(I_\CC\) belongs to \(\Uc\). For an
object \(d\in\Uc\), the preceding comparison of punctured comma
categories shows that \(d\) is a source in \(\Uc\) if and only if it is a
source in \(\CC\), and that \(d\) is lower essential in \(\Uc\) if and
only if it is lower essential in \(\CC\). Therefore,
\[
M_\Uc=M_\CC,
\qquad
E_\Uc=E_\CC,
\qquad
I_\Uc=I_\CC,
\]
where these sets are identified through the inclusion
\(\operatorname{Ob}(\Uc)\subseteq\operatorname{Ob}(\CC)\).

Moreover, for every \(d\in E_\Uc=E_\CC\), the functor \(\iota_d\)
identifies the connected components of
\(\partial(\Uc\downarrow d)\) with those of
\(\partial(\CC\downarrow d)\). The source representatives used in the
definition of a scaffold also belong to \(\Uc\): if
\(m\in M_\CC\), then
\[
\alpha_m\colon\iota r(m)\longrightarrow m
\]
must be an identity, since \(m\) receives no non-identity morphism.
Hence \(m=\iota r(m)\in\Uc\).

Let
\(
(m_A,\alpha_A\colon m_A\to d)
\)
be a representative chosen by a scaffold of \(\CC\). We have
\(m_A,d\in\Uc\), and since \(\Uc\) is full, the morphism
\(\alpha_A\) also belongs to \(\Uc\). Moreover, the connected component of
\((m_A,\alpha_A)\) in \(\partial(\Uc\downarrow d)\) corresponds to its
connected component in \(\partial(\CC\downarrow d)\). Thus, every scaffold choice in one category is the same scaffold choice in the other.
\end{proof}

The preceding proposition has the following two consequences.
\begin{proposition}\label{prop:scaffold_invariance_consequences}
Let \(\CC\) be a finite acyclic category, and let
\(\Uc\subseteq\CC\) be the full subcategory obtained in either of the
following ways:

\begin{enumerate}
    \item as the final term of a finite sequence of down beat-object
    removals
    \[
    \CC=\CC_0
    \searrow
    \CC_1
    \searrow
    \cdots
    \searrow
    \CC_n=\Uc;
    \]

    \item as the fixed subcategory
    \[
    \Uc=\operatorname{Fix}(R)
    \]
    of a right vector field
    \[
    \varepsilon\colon R\Rightarrow1_\CC.
    \]
\end{enumerate}

Then every initial scaffold of \(\CC\) is contained in \(\Uc\), and a
subcategory \(\mathcal{P}\subseteq\Uc\) is an initial scaffold of \(\CC\) if and
only if it is an initial scaffold of \(\Uc\).
\end{proposition}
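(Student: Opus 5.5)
The plan is to reduce both cases to Proposition~\ref{prop:right_retract_preserves_scaffolds}, applied once or step by step, and then chain the resulting equivalences. Every finite acyclic category is lower well-founded, and every full subcategory of an acyclic category is again finite and acyclic. Hence the proposition applies at every stage.

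For case (1), I argue by induction on \(n\). At each step \(\CC_{k-1}\searrow\CC_k\) removes a down beat object. Proposition~\ref{prop:beat_object_vector_field} shows that \(\CC_k\) is then a right directed deformation retract of \(\CC_{k-1}\) onto a full subcategory. Proposition~\ref{prop:right_retract_preserves_scaffolds} therefore says that a subcategory \(\mathcal P\subseteq\CC_k\) is an initial scaffold of \(\CC_k\) if and only if it is one of \(\CC_{k-1}\).

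We also need that every initial scaffold of \(\CC_{k-1}\) lies inside \(\CC_k\). This was established inside the proof of that proposition: \(I_{\CC_{k-1}}\subseteq\operatorname{Ob}(\CC_k)\), and \(\CC_k\) is full, so the chosen morphisms lie in \(\CC_k\). Alternatively, the scaffold condition itself forces containment once the object set lies in the subcategory.

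Composing these equivalences along the chain gives the result for \(\Uc=\CC_n\). The one subtle point is the containment. The equivalence at level \(k\) only speaks about subcategories already contained in \(\CC_k\). So we first show, inductively, that any scaffold \(\mathcal P\) of \(\CC\) lies in \(\CC_1\), hence is a scaffold of \(\CC_1\), hence lies in \(\CC_2\), and so on. Conversely, a scaffold of \(\Uc\) is a subcategory of every \(\CC_k\), so we can climb back up the chain.

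For case (2), there are two routes. The first is Theorem~\ref{thm:right_field_down_beat_reduction}, which exhibits \(\operatorname{Fix}(R)\) as the end of a finite sequence of down beat-object removals, so case (2) follows from case (1). The second, more direct route uses Theorem~\ref{thm:vector_field_stabilization}: it gives a right directed deformation retract of \(\CC\) onto \(\operatorname{Fix}(R)\), which is a full subcategory by definition, so Proposition~\ref{prop:right_retract_preserves_scaffolds} applies in a single step.

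The main obstacle is making the containment claim ``every initial scaffold of \(\CC\) is contained in \(\Uc\)'' explicit, since Proposition~\ref{prop:right_retract_preserves_scaffolds} states it only implicitly. I would extract it from the proof there in two steps:

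1. Objects of \(\CC\setminus\Uc\) are not in \(I_\CC\), and sources lie in \(\Uc\). Hence \(\operatorname{Ob}(\mathcal P)=I_\CC\subseteq\operatorname{Ob}(\Uc)\).
2. Every chosen morphism \(\alpha_A\) has both endpoints in \(\Uc\), so it lies in \(\Uc\) by fullness.

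With this recorded, the ``if and only if'' part follows directly.
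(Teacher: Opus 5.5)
Your proposal is correct and follows the paper's proof. Case (1) applies Proposition~\ref{prop:right_retract_preserves_scaffolds} successively to the retracts given by Proposition~\ref{prop:beat_object_vector_field}, and case (2) is your second route, one application of that proposition to the retract from Theorem~\ref{thm:vector_field_stabilization}; your explicit derivation of the containment $I_\CC\subseteq\operatorname{Ob}(\Uc)$, together with fullness, just spells out what the paper leaves implicit in the phrase ``have the same initial scaffolds''.
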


\begin{proof}
In the first case, every inclusion
\[
\CC_{i+1}\hookrightarrow\CC_i
\]
is a right directed deformation retract by
Proposition~\ref{prop:beat_object_vector_field}. Hence the result follows
by applying
Proposition~\ref{prop:right_retract_preserves_scaffolds}
successively.

In the second case,
Theorem~\ref{thm:vector_field_stabilization} shows that
\(\operatorname{Fix}(R)\) is a right directed deformation retract of
\(\CC\). The result again follows from
Proposition~\ref{prop:right_retract_preserves_scaffolds}.
\end{proof}
\begin{remark}
The preceding result is orientation-sensitive. Right directed deformation retracts preserve initial scaffolds because they preserve punctured lower comma categories up to directed homotopy. A dual theory of final scaffolds can be formulated using punctured upper comma categories. We do not pursue this construction here.
\end{remark}

\subsection{Examples}

\begin{example}\label{ex:initial-scaffolds-parallel-morphisms}
For \(i=1,2,3\), let \(\CC_i\) be a category generated by
\[
\begin{tikzcd}[column sep=4em]
a
  \arrow[r, "f_1", bend left=25]
  \arrow[r, "f_2"', bend right=25]
&
b
  \arrow[r, "g_1", bend left=40]
  \arrow[r, "g_2" description]
  \arrow[r, "g_3"', bend right=40]
&
c.
\end{tikzcd}
\]
Let \(\CC_1\) be the free category on this graph. Let \(\CC_2\) be
obtained by imposing
\[
g_1f_1=g_2f_1,
\qquad
g_3f_1=g_1f_2,
\]
and let \(\CC_3\) be obtained by identifying all the composites
\(g_jf_i\).

In all three cases, \(a\) is the unique source. Moreover,
\(\partial(\CC_i\downarrow b)\) consists of the two isolated objects
\(f_1\) and \(f_2\). Hence \(b\) is lower essential, and every initial
scaffold contains both morphisms
\[
f_1,f_2\colon a\to b.
\]

In the free category \(\CC_1\), the punctured comma category
\(\partial(\CC_1\downarrow c)\) has three connected components, namely
those containing \(g_1,g_2,g_3\), respectively. Thus \(c\) is lower
essential. Choosing \(g_jf_1\) in the component containing \(g_j\)
gives an initial scaffold \(\mathcal P_1\) with
\[
\operatorname{Ob}(\mathcal P_1)=\{a,b,c\}
\]
and non-identity morphisms
\[
f_1,f_2\colon a\to b,
\qquad
g_1f_1,g_2f_1,g_3f_1\colon a\to c.
\]

In \(\CC_2\), the equality \(g_1f_1=g_2f_1\) connects the components
containing \(g_1\) and \(g_2\), while
\(g_3f_1=g_1f_2\) connects those containing \(g_3\) and \(g_1\).
Therefore, \(\partial(\CC_2\downarrow c)\) is connected. The same holds
for \(\CC_3\), where all composites are identified. Consequently,
\(c\) is not lower essential in either category, and both have the
initial scaffold
\[
a
\mathrel{\substack{\xrightarrow{f_1}\\[-1mm]\xrightarrow[f_2]{}}}
b.
\]

Thus, relations among composites may change the connected components of
punctured comma categories and hence the corresponding initial
scaffolds. Notice also that \(\CC_2\) and \(\CC_3\) have the same initial
scaffold: the construction detects the connectedness of the punctured
comma categories, but not their full categorical structure.
\end{example}

\begin{example}\label{ex:incident-database-reduction}
Let \(\CC\) be the finite acyclic category generated by
\[
\begin{tikzcd}
                                                                                               &                                                                   &                                        & d_1 \\
a \arrow[r, "f_1", bend left=40] \arrow[r, "f_2" description] \arrow[r, "f_3"', bend right=40] & b \arrow[r, "g_1", bend left=25] \arrow[r, "g_2"', bend right=25] & c \arrow[ru, "h_1"] \arrow[rd, "h_2"'] &     \\
                                                                                               &                                                                   &                                        & d_2
\end{tikzcd}
\]
subject to the relations
\[
g_1f_1=g_2f_1,
\qquad
g_1f_2=g_2f_3.
\]

We interpret \(\CC\) as a database schema for processing incident
reports. The object \(a\) represents raw reports, \(b\) processed cases,
and \(c\) decisions. The morphisms
\[
f_1,f_2,f_3\colon a\to b
\]
represent three treatment procedures, while
\[
g_1,g_2\colon b\to c
\]
represent two decision methods. Finally, \(d_1\) and \(d_2\) represent
two external centres, and \(h_1\) and \(h_2\) encode the reports sent to
them after a decision has been made. The defining relations impose
consistency between certain treatment--decision pipelines.

Define \(R\colon\CC\to\CC\) on objects by
\[
R(a)=a,\qquad R(b)=b,\qquad R(c)=c,\qquad
R(d_1)=R(d_2)=c,
\]
and on generators by
\[
R(f_i)=f_i,\qquad
R(g_j)=g_j,\qquad
R(h_1)=R(h_2)=1_c.
\]
The components
\[
\varepsilon_a=1_a,
\qquad
\varepsilon_b=1_b,
\qquad
\varepsilon_c=1_c,
\qquad
\varepsilon_{d_1}=h_1,
\qquad
\varepsilon_{d_2}=h_2
\]
define a natural transformation
\[
\varepsilon\colon R\Rightarrow1_\CC.
\]
Hence \((R,\varepsilon)\) is a right vector field. It is already
idempotent, and its fixed subcategory is
\[
\operatorname{Fix}(R)
=
\CC|_{\{a,b,c\}}.
\]
Thus, the vector field removes the two reporting centres from the reduced
schema, while their values remain determined functorially by the selected
decision.

We next compute an initial scaffold of the fixed subcategory
\[
\operatorname{Fix}(R)=\CC|_{\{a,b,c\}}.
\]
The object \(a\) is its unique source. Moreover,
\[
\partial\bigl(\operatorname{Fix}(R)\downarrow b\bigr)
\]
consists of the three isolated objects
\[
f_1,\qquad f_2,\qquad f_3.
\]
Therefore, \(b\) is lower essential, and every initial scaffold of
\(\operatorname{Fix}(R)\) contains the three morphisms
\[
f_1,f_2,f_3\colon a\to b.
\]

By contrast,
\[
\partial\bigl(\operatorname{Fix}(R)\downarrow c\bigr)
\]
is connected. Indeed, every composite \(g_jf_i\) admits a morphism to
\(g_j\), induced by \(f_i\), and the equality
\[
g_1f_1=g_2f_1
\]
connects \(g_1\) and \(g_2\). The second relation provides an additional
connection between the two branches. Consequently,
\[
M_{\operatorname{Fix}(R)}=\{a\},
\qquad
E_{\operatorname{Fix}(R)}=\{b\},
\qquad
I_{\operatorname{Fix}(R)}=\{a,b\}.
\]

Thus, an initial scaffold of \(\operatorname{Fix}(R)\) is
\[
\mathcal P
=
\left(
\begin{tikzcd}[column sep=4em, baseline=-0.5ex]
a
  \arrow[r, "f_1", bend left=40]
  \arrow[r, "f_2" description]
  \arrow[r, "f_3"', bend right=40]
&
b.
\end{tikzcd}
\right)
\]
By Proposition~\ref{prop:scaffold_invariance_consequences},
\(\mathcal P\) is also an initial scaffold of \(\CC\).

Consider now a database
\[
X\colon\CC\longrightarrow\Sets.
\]
Let
\[
X(a)=\{r_1,r_2,r_3,r_4,r_5,r_6\}
\]
be a set of raw incident reports and let
\[
X(b)=\{A,B,C,D\}
\]
be a set of processed cases. The three treatment procedures are given by
\[
\begin{array}{c|cccccc}
r
&
r_1&r_2&r_3&r_4&r_5&r_6
\\
\hline
X(f_1)(r)
&
A&A&B&B&C&C
\\
X(f_2)(r)
&
A&B&B&C&C&D
\\
X(f_3)(r)
&
A&B&B&D&C&A.
\end{array}
\]

Let
\[
X(c)=\{\mathrm{low},\mathrm{medium},\mathrm{high}\},
\]
and define the two decision methods by
\[
\begin{array}{c|cccc}
q&A&B&C&D\\
\hline
X(g_1)(q)
&
\mathrm{low}
&
\mathrm{medium}
&
\mathrm{high}
&
\mathrm{low}
\\
X(g_2)(q)
&
\mathrm{low}
&
\mathrm{medium}
&
\mathrm{high}
&
\mathrm{high}.
\end{array}
\]
These functions satisfy
\[
X(g_1)X(f_1)=X(g_2)X(f_1)
\]
and
\[
X(g_1)X(f_2)=X(g_2)X(f_3),
\]
so \(X\) respects the defining relations of \(\CC\).

Finally, let
\[
X(d_1)=\{\mathrm{routine},\mathrm{urgent}\},
\qquad
X(d_2)=\{\mathrm{archive},\mathrm{follow\mbox{-}up}\}.
\]
The reports sent to the two centres are determined by
\[
\begin{array}{c|ccc}
q
&
\mathrm{low}
&
\mathrm{medium}
&
\mathrm{high}
\\
\hline
X(h_1)(q)
&
\mathrm{routine}
&
\mathrm{routine}
&
\mathrm{urgent}
\\
X(h_2)(q)
&
\mathrm{archive}
&
\mathrm{follow\mbox{-}up}
&
\mathrm{follow\mbox{-}up}.
\end{array}
\]

The vector-field reduction and the initial scaffold give successive
natural bijections
\[
\Gamma(\CC,X)
\cong
\Gamma\bigl(
\operatorname{Fix}(R),
X|_{\operatorname{Fix}(R)}
\bigr)
\cong
\Gamma\bigl(
\mathcal P,
X|_{\mathcal P}
\bigr).
\]
A section over \(\mathcal P\) consists of a raw report
\(r\in X(a)\) and a processed case \(q\in X(b)\) satisfying
\[
X(f_1)(r)
=
X(f_2)(r)
=
X(f_3)(r)
=
q.
\]
Inspection of the table shows that this happens precisely for
\[
(r_1,A),
\qquad
(r_3,B),
\qquad
(r_5,C).
\]
Hence
\[
|\Gamma(\CC,X)|=3.
\]

The remaining values of each global section are uniquely determined by
the decision and reporting maps. The three sections are
\[
(r_1,A,\mathrm{low},\mathrm{routine},\mathrm{archive}),
\]
\[
(r_3,B,\mathrm{medium},\mathrm{routine},
\mathrm{follow\mbox{-}up}),
\]
and
\[
(r_5,C,\mathrm{high},\mathrm{urgent},
\mathrm{follow\mbox{-}up}).
\]

Thus, the right vector field first reduces the five-object schema
\(\CC\) to the three-object category \(\operatorname{Fix}(R)\). Its
initial scaffold then reduces the global-section problem to comparing the
three treatment procedures
\[
f_1,f_2,f_3\colon a\to b.
\]
Once a coherent processed case has been selected, the decision and the
two reports are recovered uniquely through \(g_1,g_2,h_1\), and \(h_2\).
\end{example}

\bibliographystyle{plain}
\bibliography{biblio}

\end{document}